\title{The Hecke algebra action on Morava $E$-theory of height $2$}
\author{Yifei Zhu}
\address{Department of Mathematics\\Northwestern University\\
         Evanston\\IL 60208\\USA}
\email{zyf@math.northwestern.edu}
\newtheorem{thm}[equation]{Theorem}
\newtheorem{prop}[equation]{Proposition}
\newtheorem{lem}[equation]{Lemma}
\theoremstyle{definition}
\newtheorem{defn}[equation]{Definition}
\newtheorem{cstr}[equation]{Construction}
\theoremstyle{remark}
\newtheorem{rmk}[equation]{Remark}
\newtheorem{ex}[equation]{Example}
\def\co{\colon\thinspace}
\newcommand{\mb}[1]{\mathbb{#1}}
\newcommand{\Aut}{{\rm Aut}}
\newcommand{\Proj}{{\rm Proj\thinspace}}
\newcommand{\Spf}{{\rm Spf\thinspace}}
\newcommand{\cF}{\overline {\mb F}}
\newcommand{\CC}{{\cal C}}
\newcommand{\CE}{{\cal E}}
\newcommand{\CF}{{\cal F}}
\newcommand{\CG}{{\cal G}}
\newcommand{\CM}{{\cal M}}
\newcommand{\CMB}{\overline {\cal M}}
\newcommand{\CO}{{\cal O}}
\newcommand{\CP}{{\cal P}}
\newcommand{\Frob}{{\rm Frob}}
\newcommand{\DL}{Dyer-Lashof~}
\newcommand{\BC}{{\mb C}}
\newcommand{\BF}{{\mb F}}
\newcommand{\BG}{{\mb G}}
\newcommand{\BN}{{\mb N}}
\newcommand{\BP}{{\mb P}}
\newcommand{\BQ}{{\mb Q}}
\newcommand{\BW}{{\mb W}}
\newcommand{\BZ}{{\mb Z}}
\newcommand{\HC}{\widehat{C~}\!}
\newcommand{\HCC}{\widehat{~\!\!\CC}}
\newcommand{\md}{~~{\rm mod}~}
\newcommand{\ad}{\text{and}}
\newcommand{\can}{{\rm can}}
\newcommand{\TMF}{{\rm TMF}}
\newcommand{\MF}{{\rm MF}}
\newcommand{\A}{\alpha}
\newcommand{\B}{\beta}
\renewcommand{\D}{\Delta}
\renewcommand{\d}{\delta}
\newcommand{\f}{\phi}
\newcommand{\G}{\Gamma}
\newcommand{\K}{\kappa}
\renewcommand{\l}{\lambda}
\renewcommand{\o}{\omega}
\newcommand{\ou}{\underline{\omega\!}}
\newcommand{\si}{\sigma}
\newcommand{\T}{\tau}
\newcommand{\ce}{\coloneqq}
\newcommand{\lb}{\llbracket}
\newcommand{\rb}{\rrbracket}
\newcommand{\lp}{(\!(}
\newcommand{\rp}{)\!)}
\newcommand{\wt}[1]{\textcolor{white}{#1} \!~}
\newcommand{\gl}{{\rm gl}}
\newcommand{\GL}{{\rm GL}}
\newcommand{\SL}{{\rm SL}}
\newcommand{\Tate}{{\rm Tate}}
\DeclareRobustCommand\widecheck[1]{{\mathpalette\@widecheck{#1}}}
\def\@widecheck#1#2{%
    \setbox\z@\hbox{\m@th$#1#2$}%
    \setbox\tw@\hbox{\m@th$#1%
       \widehat{%
          \vrule\@width\z@\@height\ht\z@
          \vrule\@height\z@\@width\wd\z@}$}%
    \dp\tw@-\ht\z@
    \@tempdima\ht\z@ \advance\@tempdima2\ht\tw@ \divide\@tempdima\thr@@
    \setbox\tw@\hbox{%
       \raise\@tempdima\hbox{\scalebox{1}[-1]{\lower\@tempdima\box
\tw@}}}%
    {\ooalign{\box\tw@ \cr \box\z@}}}
\numberwithin{equation}{section}
\begin{document}

\begin{abstract}
 Given a one-dimensional formal group of height 2, let $E$ be the Morava 
 $E$-theory spectrum associated to its universal deformation over the Lubin-Tate 
 ring.  By computing with moduli spaces of elliptic curves, we give an 
 explicitation for an algebra of Hecke operators acting on $E$-cohomology.  This 
 leads to a vanishing result for Rezk's logarithmic cohomology operation on the 
 units of $E$.  It identifies a family of elements in the kernel with 
 meromorphic modular forms whose Serre derivative is zero.  Our calculation 
 finds a connection to logarithms of modular units.  In particular, we work out 
 an action of Hecke operators on certain ``logarithmic'' $q$-series, in the 
 sense of Knopp and Mason, that agrees with our vanishing result and extends the 
 classical Hecke action on modular forms.  
\end{abstract}

\maketitle

\section{Introduction}

In the context of elliptic cohomology, Hecke operators have been studied as 
cohomology operations by Baker \cite{Baker89,Baker90}, Ando \cite{Ando95}, and 
Ganter \cite{moonshine,orbifold,stringy}, among others.  The various cohomology 
theories each have as coefficients a ring of modular forms of a particular 
type---modular forms on $\SL_2(\BZ)$ with no condition at the cusp, $p$-adic 
modular forms, modular forms with level structure---or it is a closely related 
ring, e.g., a completion of the former at an ideal.  These cohomology theories 
are topological realizations of the domain that Hecke operators act on.  

The notion of an elliptic spectrum \cite[Definition 1.2]{AHS01} and the theory 
of multiplicative ring spectra, notably the theorem of Goerss, Hopkins, and 
Miller \cite[Corollary 7.6]{GH}, enable studying this action of Hecke operators 
via power operations.  These operations arise from the multiplicative ring 
structure on a spectrum, and they capture all the algebraic structure that 
naturally adheres to the cohomology theory represented by the spectrum 
\cite{H_infty,lpo}.  

In particular, Morava $E$-theories are a family of cohomology theories whose 
power operations are better understood thanks to the work of Ando, Hopkins, 
Strickland, and Rezk \cite{AHS04,cong,iph}.  This family is organized by heights 
$n$ and primes $p$.  Here the theory of formal groups plays a key role: each 
$E$-theory spectrum $E$ corresponds to a specific one-dimensional commutative 
formal group, whose finite flat subgroups, in turn, correspond precisely to 
power operations on $E$-cohomology.  At height $n = 2$, formal groups of 
elliptic curves provide concrete models for $E$-theories.  The associated power 
operations can then be computed from moduli spaces that parametrize isogenies 
between elliptic curves.  

In this paper, we study the action of Hecke operators on Morava $E$-theories at 
height 2, based on the explicit calculations of power operations in 
\cite{h2p2,p3}.  This is motivated by the interpretation---in terms of Hecke 
operators---for Rezk's logarithmic operations on $E$-cohomology at an arbitrary 
height (see \cite[1.12]{log}).  At height 2, these ``logarithms'' are critical 
in the work of Ando, Hopkins, and Rezk on rigidification of the string-bordism 
elliptic genus, or more precisely, on $E_\infty$-string orientations of the 
spectrum of topological modular forms \cite{koandtmf}.  For the space of such 
orientations, its set of components can be detected by elements in the kernel of 
a logarithm.  Explicitly, these elements are identified with Eisenstein series 
$\CE_k$, which are eigenforms of Hecke operators \cite[Theorem 12.3]{koandtmf} 
(cf.~\cite[Theorem 40]{Sprang}).  

In Section \ref{sec:kerlog}, we give a different account of certain elements in 
the kernel of a logarithm at height 2: they are meromorphic modular forms with 
vanishing Serre derivative, including modular forms whose zeros and poles are 
located only at the cusps, such as the modular discriminant $\D$.  This is 
stated as Theorem \ref{thm:kerlog}.  

The discrepancy between the above two sets of elements---Eisenstein series as 
opposed to the discriminant---results from the different domains of the 
logarithms.  The former is the group of units in the zeroth cohomology of an 
even-dimensional sphere, while the latter is with respect to the 
infinite-dimensional complex projective space $\BC\BP^\infty$, that is, the 
logarithms are defined on $E^0(S^{2 k})^\times$ and $E^0(\BC\BP^\infty)^\times$ 
respectively.  

The finiteness of $E^0(S^{2 k})$ as a module over $E^0$ leads to a simple 
formula for the logarithm.  Specifically, the logarithm can be written as a 
combination of Hecke operators acting on $\log (1 + f)$, where $f$ is a 
generator of the truncated polynomial ring $E^0(S^{2 k})$.  Note that the formal 
power series expansion of $\log (1 + f)$ simply equals $f$, because $f^2 = 0$ 
(see \cite[Proposition 4.8 and Example 4.9]{koandtmf}).  

In the latter case of $E^0(\BC\BP^\infty)$, we calculate instead with $\log(g)$ 
for units $g$ in $E^0$.  Through a generator $u$ of the formal power series ring 
$E^0(\BC\BP^\infty)$, certain $g$ can be represented by meromorphic modular 
forms (cf.~Proposition \ref{prop:mfe0}).  Without nilpotency of $g$ as in the 
case studied by Ando, Hopkins, and Rezk, a different set of tools from number 
theory is applied.  In particular, it came as a surprise to the author that the 
formula of Rezk for the logarithms, which arose from a purely homotopy-theoretic 
construction, resembled the {\em logarithms of ratios of Siegel functions} 
studied by Katz in \cite[Section 10.1]{padicinterp} (see Remark 
\ref{rmk:ratio}).  Indeed, Katz's approach to those logarithms inspired a final 
step in our proof of Theorem \ref{thm:kerlog}.  

Section \ref{sec:logq} provides a ``feedback'' to number theory.  As mentioned 
above, various types of modular forms have been associated with elliptic 
cohomology theories.  In our calculation of the logarithms on $E$-cohomology, 
the occurrence of $\log(g)$ indicates that Morava $E$-theories at height 2 
witness a larger class of functions on elliptic curves than modular forms.  In 
fact, for meromorphic modular forms $g$ in a $q$-expansion, $\log(g)$ are {\em 
logarithmic} $q$-series studied by Knopp and Mason \cite{KnoppMason}.  A 
paradigm is $\log(\D)$, which we discuss in Example \ref{ex:logD} and Remark 
\ref{rmk:eichler}.  Motivated by the logarithmic operations from homotopy 
theory, we sketch how the domain of Hecke operators may extend to include 
certain logarithmic $q$-series.  This is Definition \ref{def:logq}, from which 
the statement for the case $f = \D$ in Theorem \ref{thm:kerlog} falls out.  

This interplay via Hecke operators between homotopy theory and number theory is 
bookended by the foundational material in Sections \ref{sec:mf2E}--\ref{sec:ct} 
and further discussions in Section \ref{sec:individual} on Hecke operators as 
additive power operations.  

Crucial to our understanding of the action of Hecke operators are the explicit 
computations for power operations.  Building on the prime-2 case in \cite{h2p2} 
and the prime-3 case in \cite{p3}, we present in Section \ref{subsec:po} a 
general recipe for computing power operations on Morava $E$-theories at height 2 
for all primes, and make the prime-5 calculations available as a working example 
throughout the paper (Examples \ref{ex:mfe0}, \ref{ex:po}, \ref{ex:ho}, 
\ref{ex:log}, \ref{ex:gamma}, and \ref{ex:t5}).  The explicit formulas show that 
the ``topological'' Hecke operators---constructed directly from power operations 
as in \cite[Proposition 3.6.2]{Ando95}---do not agree with the classical Hecke 
operators acting on modular forms (Remark \ref{rmk:tc}).  This explicitation 
also shows that a topological Hecke operator may not commute with all additive 
power operations on $E$-cohomology (Theorem \ref{thm:center}).  
\vspace{-.07in}

\subsection{Acknowledgements}

I thank Andrew Baker, Paul Goerss, Tyler Lawson, Charles Rezk, and Joel Specter 
for helpful discussions.  

I thank my friends Tzu-Yu Liu and Meng Yu, and their parents, for the 
hospitality I received during me writing this paper in their home in California.

\subsection{Conventions}

Throughout the paper, $p$ will denote a prime, and $N$ an integer prime to $p$ 
with $N > 3$.  

Let $\MF\big(\G_1(N)\big)$ be the graded ring of modular forms of level 
$\G_1(N)$ over $\BZ[1/N]$ with no condition at the cusps 
(cf.~\cite[Section 1.2]{padicprop}).  Complex-analytically, this corresponds to 
the graded complex vector space of weakly holomorphic modular forms on $\G_1(N)$ 
(cf.~\cite[Definition 1.12]{web}).  For simplicity, we will refer to these as 
{\em modular forms of level $\G_1(N)$}, or {\em modular forms} if the congruence 
subgroup $\G_1(N)$ is clear from the context.  

By a {\em meromorphic modular form}, we mean a complex-valued modular form that 
is meromorphic both at the cusps and over the upper half of the complex plane 
\cite[Definition 1.8]{web}.  

We use a list of symbols.  
\begin{center}
 \begin{tabular}{ll}
  $\BF_p$ & a field with $p$ elements \\
  $\BZ_p$ & the ring of $p$-adic integers \\
  $\BQ_p$ & the field of $p$-adic numbers \\
  $\overline{k}$ & a separable closure of a field $k$ \\
  $\BW(k)$ & the ring of $p$-typical Witt vectors over a field $k$ of 
             characteristic $p$ \\
  $\Sigma_m$ & the symmetric group on $m$ letters \\
  $R^\times$ & the multiplicative group of invertible elements in a ring $R$ \\
  $R_I^\wedge$ & the completion of a ring $R$ with respect to an ideal $I$ \\
  $R(\!(x)\!)$ & the ring of formal Laurent series over a ring $R$ in a variable 
                 $x$ \\
               & write $\sum_{n > -\infty} a_n x^n$ for a general element in 
                 this ring \\
  $C/R$ \, or \, $C_R$ & a scheme $C$ over a ring $R$ \\
  $[m]$ & the multiplication-by-$m$ map on a group scheme \\
  $C[m]$ & the kernel of $[m]$ on a group scheme $C$ \\
  $\HC$ & the formal completion of an elliptic curve $C$ at the identity \\
  $q$ & $e^{2 \pi i z}$ for any $z \in \BC$ \\
  $\si_s(m)$ & $\displaystyle \sum_{\stackrel{\scriptstyle d~\!|~\!m}{d \geq 1}} 
               d^s$ for any positive integer $m$, with $s \in \BZ$ 
 \end{tabular}
\end{center}

\subsection{Correction to two cited formulas}
\label{subsec:correction}

For the reader's convenience, we list the following typographical corrections.  
\begin{itemize}
 \item \cite[(1.11.0.4)]{padicprop} The last line should start with $\ell^{-k}$ 
 instead of $\ell^{-1}$.  

 \item \cite[Problem 16-A]{cc} The left-hand side of Girard's formula should 
 read $(-1)^n s_n / n$.  The summation on the right-hand side is over 
 $i_1 + 2\,i_2 + \cdots + k\,i_k = n$.  
\end{itemize}

\section{From rings of modular forms to homotopy groups of\\
         Morava $E$-theories}
\label{sec:mf2E}

In this section, we explain how every modular form of level $\G_1(N)$ gives an 
element in the zeroth coefficient ring of a Morava $E$-theory at height 2 and 
prime $p$.  The connection comes from finding explicit models for the 
$E$-theory.

\subsection{Models for an $E$-theory of height 2}
\label{subsec:model}

Morava $E$-theory spectra can be viewed as topological realizations of 
Lubin-Tate rings.  Specifically, given a formal group $\BG_0$ of height 
$n < \infty$ over a perfect field $k$ of characteristic $p$, the associated 
Morava $E$-theory (of height $n$ at the prime $p$) is a complex-oriented 
cohomology theory $E$ whose formal group $\Spf E^0 \BC\BP^\infty$ is the 
universal deformation of $\BG_0$ over the Lubin-Tate ring 
\[
 \BW \big( \cF_p \big) \lb u_1, \ldots, u_{n - 1} \rb \cong E^0 
\]
(see \cite[Section 3]{LT} and \cite[Section 7]{GH}).  For height $n = 2$, via 
the Serre-Tate theorem \cite{LST} (cf.~\cite[Theorem 2.9.1]{KM}), this universal 
deformation of a formal group can be obtained from a universal deformation of a 
supersingular elliptic curve.  We construct the latter as the universal family 
of elliptic curves equipped with a level-$\G_1(N)$ structure.  

Write $\CP_N$ for the representable moduli problem of smooth elliptic curves 
over $\BZ[1/N]$ with a choice of a point $P_0$ of exact order $N$ and a 
nonvanishing one-form $\o$.  The following two universal families represent 
$\CP_N$ for $N = 4$ and $N = 5$ respectively, and they suffice to model 
$E$-theories of height 2 at all primes.  

\begin{ex}[{\cite[Proposition 2.1]{p3}}]
 \label{ex:4}
 The moduli problem $\CP_4$ is represented by the curve 
 \[
  \CC_4 \co y^2 + A x y + A B y = x^3 + B x^2 
 \]
 over the graded ring 
 \[
  S_4 \ce \BZ[1/4][A, B, \D^{-1}] 
 \]
 where $|A| = 1$, $|B| = 2$, and $\D = A^2 B^4 (A^2 - 16 B)$.  The chosen point 
 is $P_0 = (0,0)$ and the chosen one-form is $\o = du$ with $u = x / y$.  
\end{ex}

\begin{ex}[{\cite[Corollary 1.1.10]{tmf5}}]
 \label{ex:5}
 The moduli problem $\CP_5$ is represented by 
 \[
  \CC_5 \co y^2 + A x y + B^2 (A - B) y = x^3 + B (A - B) x^2 
 \]
 over the graded ring 
 \[
  S_5 \ce \BZ[1/5][A, B, \D^{-1}] 
 \]
 where $|A| = |B| = 1$ and $\D = B^5 (B - A)^5 (A^2 + 9 A B - 11 B^2)$.  Again 
 the chosen point is $P_0 = (0,0)$ and the chosen one-form is $\o = du$ with 
 $u = x / y$.  Moreover, writing $\zeta \ce e^{2 \pi i / 5}$, we have $\D = B^5 
 (B - A)^5 \big( A - (5 \zeta^4 + 5 \zeta - 2) B \big) \big( A + (5 \zeta^4 + 5 
 \zeta + 7) B \big)$.  
\end{ex}

\begin{rmk}
 \label{rmk:tmf3}
 The moduli problem $\CP_3$ is also representable \cite[Proposition 3.2]{tmf3}, 
 but $[\G_1(3)]$ is not (cf.~\cite[Corollary 2.7.4]{KM}).  The beginning of 
 \cite[Section 1]{tmf5} explains the relationship between these two moduli 
 problems.  For the prime 2, compare \cite[Corollary 1.1.11]{tmf5} and 
 \cite[Section 3.1]{level3II} (see also Proposition \ref{prop:tmfe} below).  
\end{rmk}

From each of the above examples, restricting $\CC_N$ over a closed point in the 
supersingular locus at $p$, we get a supersingular elliptic curve $C_0$ over 
$\cF_p$.  By the Serre-Tate theorem \cite{LST} (cf.~\cite[Theorem 2.9.1]{KM}), 
the formal completion $\HCC_N$ of $\CC_N$ at the identity then gives the 
universal deformation of the formal group $\HC_0$.  Here $\HC_0 / \cF_p$ is of 
height 2, and it models $\BG_0 / k$ for the $E$-theory we begin with.  

\begin{rmk}
 \label{rmk:N}
 For a fixed $E$-theory, the various models each involve a choice of $N$ for the 
 $\CP_N$-structures, and a choice of an isomorphism class of supersingular 
 elliptic curves over $\cF_p$ equipped with a $\CP_N$-structure.  Over a 
 separably closed field of characteristic $p$, any two formal group laws of the 
 same height are isomorphic \cite[Th\'eor\`eme IV]{Lazard}.  In view of this and 
 the universal property in the Lubin-Tate theorem \cite[Theorem 3.1]{LT}, we see 
 that up to isomorphism the $E$-theory is independent of these choices.  
\end{rmk}

Topologically, there is a $K(2)$-localization that corresponds to the above 
completion along the supersingular locus at $p$, as we now describe.  

The graded rings $S_N$ in Examples \ref{ex:4} and \ref{ex:5} can be identified 
as $\MF\big(\G_1(N)\big)$.  Their topological realizations are the periodic 
spectra $\TMF\big(\G_1(N)\big)$ of topological modular forms of level $\G_1(N)$ 
(see \cite[Section 2]{tmf3} and cf.~\cite{logetaletmf}).  Following the 
convention that elements in algebraic degree $k$ lie in topological degree 
$2 k$, we have 
\[
 \pi_* \TMF\big(\G_1(4)\big) \cong \BZ[1/4][A, B, \D^{-1}] 
\]
with $|A| = 2$, $|B| = 4$, and $\D = A^2 B^4 (A^2 - 16 B)$, and 
\[
 \pi_* \TMF\big(\G_1(5)\big) \cong \BZ[1/5][A, B, \D^{-1}] 
\]
with $|A| = |B| = 2$ and $\D = B^5 (B - A)^5 (A^2 + 9 A B - 11 B^2)$.  

\begin{prop}[{cf.~\cite[Section 3.5]{BOSS} and 
\cite[Corollary on page 20]{Tnf}}]
 \label{prop:tmfe}
 Let $K(2)$ be the Morava $K$-theory spectrum at height $2$ and prime $p$, with 
 $\pi_* K(2) \cong \cF_p [v_2^{\pm 1}]$, where $|v_2| = 2 (p^2 - 1)$.  Let 
 $N > 3$ be an integer prime to $p$.  Denote by $L_{K(2)} \TMF\big(\G_1(N)\big)$ 
 the Bousfield localization of $\TMF\big(\G_1(N)\big)$ with respect to $K(2)$.  
 Given a Morava $E$-theory spectrum $E$ of height $2$ at the prime $p$, there is 
 a noncanonical isomorphism 
 \[
  L_{K(2)} \TMF\big(\G_1(N)\big) \cong 
  \underbrace{E \times \cdots \times E}_\text{$m$ copies} 
 \]
 of $E_\infty$-ring spectra, where $m$ is the number of isomorphism classes of 
 supersingular elliptic curves over $\cF_p$ equipped with a level-$\G_1(N)$ 
 structure.  
\end{prop}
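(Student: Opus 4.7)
The plan is to decompose the $K(2)$-localization geometrically, using the Goerss--Hopkins--Miller realization of $\TMF\bigl(\G_1(N)\bigr)$ as global sections of a sheaf of $E_\infty$-ring spectra on the moduli stack of elliptic curves with $\G_1(N)$-structure, and then match each factor in the product with the Morava $E$-theory $E$ via the Serre--Tate theorem. I would organize the argument in three stages.

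First, I would set up the algebro-geometric framework. Let $\CM \ce \CM\bigl(\G_1(N)\bigr)_{\BZ_p}$ be the Deligne--Mumford stack of elliptic curves with a $\G_1(N)$-structure over $\BZ_p$, and let $\CO^{\rm top}$ denote the Goerss--Hopkins--Miller sheaf of $E_\infty$-rings on its \'etale site. By construction, $\TMF\bigl(\G_1(N)\bigr)$ is recovered as the global sections of $\CO^{\rm top}$, and the $\CC_N$ of Examples \ref{ex:4}--\ref{ex:5} furnish \'etale charts compatible with $\CO^{\rm top}$.

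Second, I would identify the $K(2)$-localization with the completion along the supersingular locus. Let $Z \subset \CM \otimes \cF_p$ be the locus where the formal group $\HC$ has height $2$; this is a finite discrete substack whose geometric points classify, by definition, the $m$ isomorphism classes in the statement. The chromatic picture says that the Morava $K$-theories of a height-stratified spectrum vanish away from the corresponding stratum, so $L_{K(2)}\TMF\bigl(\G_1(N)\bigr)$ is computed by pushing forward $\CO^{\rm top}$ from the formal completion $\CM^\wedge_Z$. Because $Z$ is a disjoint union of $m$ closed points, this completion splits as a coproduct of formal neighborhoods, hence on global sections as a product of $m$ factors, each an $E_\infty$-ring spectrum.

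Third, I would identify each factor with $E$. Fix a geometric point $x \in Z$ corresponding to a supersingular elliptic curve $C_0/\cF_p$ with $\G_1(N)$-structure. By the Serre--Tate theorem (as already invoked in Section \ref{subsec:model}), the formal completion of $\CM$ at $x$ classifies deformations of the formal group $\HC_0$, and therefore, as a formal scheme equipped with $\CO^{\rm top}$, it is $\Spf$ of the Lubin-Tate $E_\infty$-ring attached to $\HC_0$. By Remark \ref{rmk:N}, any two such Lubin-Tate rings (for varying choices of $x$) are abstractly isomorphic to $E$, yielding the noncanonical identification factor-by-factor.

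I expect the main obstacle to be the second stage: rigorously pinning down that $K(2)$-localization of $\TMF\bigl(\G_1(N)\bigr)$ is the pushforward of $\CO^{\rm top}$ along $\CM^\wedge_Z \hookrightarrow \CM$, at the level of $E_\infty$-ring spectra rather than merely homotopy groups. This requires invoking the behavior of Bousfield localization with respect to the sheaf $\CO^{\rm top}$, for which I would appeal to the references \cite[Section 3.5]{BOSS} and \cite[Corollary on page 20]{Tnf} cited in the statement; the remaining pieces, namely the finiteness of $Z$ and the Serre--Tate identification of each formal neighborhood, are then essentially bookkeeping.
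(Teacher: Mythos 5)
Your overall route---decompose along the supersingular locus and identify each completed local piece with a Lubin--Tate deformation space via Serre--Tate---is the same geometric picture underlying the paper's proof, but your third stage has a genuine gap: you never invoke the rigidity of the moduli problem $[\G_1(N)]$ for $N>3$, and without that input the argument proves a false statement. The formal completion of the moduli \emph{stack} at a geometric point $x$ corresponding to a supersingular pair $(C_0,P_0)$ is not the Lubin--Tate deformation space itself but its quotient by the finite group $G$ of automorphisms of $C_0$ preserving the level structure, and the corresponding factor of the $K(2)$-localization is the homotopy fixed-point spectrum $E^{hG}$, not $E$. Run your argument verbatim with no level structure (or with small $N$) and it would yield $L_{K(2)}\TMF\cong\prod E$, which fails precisely because supersingular curves have extra automorphisms. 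The step you are missing, and which is the crux of the paper's proof, is \cite[Corollary 2.7.4]{KM}: for $N>3$ the moduli problem $[\G_1(N)]$ is rigid, so $G$ is trivial and $E^{hG}=E$. The paper phrases the entire argument in these homotopy fixed-point terms---each supersingular closed point contributes a copy of $E^{hG}$ by Goerss--Hopkins--Miller, and rigidity kills $G$---which makes the dependence on $N>3$ explicit; your sheaf-theoretic phrasing is acceptable, but you must still justify that the completed local object of the stack at $x$ is honestly $\Spf$ of the Lubin--Tate ring rather than a stacky quotient of it. Your deferral of the comparison between $K(2)$-localization and completion along the supersingular locus to the cited references is consistent with the level of detail in the paper's own proof, so that part is not the issue.
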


\begin{proof}
 Let $C_0$ be a supersingular elliptic curve over $\cF_p$ equipped with a level 
 $\G_1(N)$-structure.  Its formal group $\HC_0 / \cF_p$ gives a model for the 
 $E$-theory.  By the Goerss-Hopkins-Miller theorem \cite[Corollary 7.6]{GH}, the 
 spectrum $E$ admits an action of the automorphism group $\Aut(C_0 / \cF_p)$.  

 Let $G$ be the subgroup of $\Aut(C_0 / \cF_p)$ consisting of automorphisms that 
 preserve the $\G_1(N)$-structure on $C_0$.  In view of Remark \ref{rmk:N}, we 
 then obtain $L_{K(2)} \TMF\big(\G_1(N)\big)$ as taking homotopy fixed points 
 for the action of $G$ on $E$, one such copy for each closed point in the 
 supersingular locus at $p$.  By \cite[Corollary 2.7.4]{KM}, the moduli problem 
 $[\G_1(N)]$ is rigid when $N > 3$, and thus $G$ is trivial.  Again by the 
 Goerss-Hopkins-Miller theorem, we get the stated isomorphism as one between 
 $E_\infty$-ring spectra.  
\end{proof}

\subsection{Homotopy groups of an $E$-theory at height 2}
\label{subsec:mfe0}

Thanks to the explicit models for $E$-theories, the global-to-local relationship 
between $\TMF\big(\G_1(N)\big)$ and $E$ in Proposition \ref{prop:tmfe} can be 
spelled out on homotopy groups.  The next example illustrates the passage from 
$\pi_* \TMF\big(\G_1(N)\big)$ to 
\[
 E_* \cong \BW \big( \cF_p \big) \lb u_1 \rb [u^{\pm 1}] 
\]
where the ``deformation'' parameter $u_1$ in degree 0 comes from a Hasse 
invariant, and the 2-periodic unit $u$ in degree $-2$ corresponds to a local 
uniformizer at the identity of the universal elliptic curve $\CC_N$.  

\begin{ex}
 \label{ex:mfe0}
 Let $p = 5$ and $N = 4$.  

 By \cite[V.4.1a]{AEC}, the Hasse invariant of $\CC_4$ at the prime 5 equals 
 $A^4 - A^2 B + B^2 \in \BF_5[A, B]$.  For a reason that will become clear in 
 \eqref{W}, we choose an integral lift of this Hasse invariant given by 
 \[
  H \ce A^4 - 16 A^2 B + 26 B^2 \in S_4 
 \]
 At the prime 5, the supersingular locus of $\CC_4$ is then the closed subscheme 
 of $\Proj S_4$ cut out by the ideal $(5, H)$.  It consists of a single closed 
 point, as $H$ is irreducible over $\BF_5$.  Since $\D = A^2 B^4 (A^2 - 16 B)$ 
 gets inverted in $S_4$, the scheme $\Proj S_4$ is affine, and is contained in 
 the affine open chart 
 \[
  \Proj \BZ[1/4] [A, B] [A^{-1}] 
 \]
 for the weighted projective space $\Proj \BZ[1/4] [A, B]$.  There is another 
 affine chart 
 \[
  \Proj \BZ[1/4] [A, B] [u] 
 \]
 with $u^2 = B^{-1}$ that is \'etale over $\Proj \BZ[1/4] [A, B]$.  The 
 supersingular point is contained in both charts, as illustrated in the 
 following diagram.  
 \\
 \begin{equation*}
  \begin{tikzpicture}[baseline=(current bounding box.center), 
  every fit/.style={ellipse,draw,inner sep=0pt}, mytext/.style={inner sep=6pt}]
   \node[mytext,align=left,rectangle] (a) at (0, 0) 
        {$A^{-1}$ \\ \\ \\ \\ $\scriptstyle \text{(affine open)}$}; 
   \node[mytext,align=left,rectangle] (b) at (6, 0) 
        {$\quad~ \pm B^{-1/2}$ \\ \\ \\ \\ $\scriptstyle 
         \text{(affine \'etale)}$}; 
   \node[mytext,align=left,rectangle] (c) at (3, 0) 
        {$\quad~\, _\bullet$ \\ ~ s.sing.}; 
   \node[mytext,align=left,rectangle] (c') at (3.2, 0) {}; 
   \node[mytext,align=left,rectangle] (d) at (.4, 0) 
        {$~\, \Proj S_4$ \\ $\scriptstyle \!\text{(affine open)}$}; 
   \node[draw,fit=(a) (c)] {}; 
   \node[draw,fit=(d) (c')] {}; 
   \node[draw,double,fit=(b) (c)] {}; 
  \end{tikzpicture}
 \end{equation*}
 We now pass to the homotopy groups of the corresponding $E$-theory spectrum 
 $E$.  Define elements 
 \[
  a \ce u A, \quad h \ce u^4 H = a^4 - 16 a^2 + 26, \quad \ad \quad \d \ce 
  u^{12} \D = h - 26 
 \]
 Following the convention that elements in algebraic degree $k$ lie in 
 topological degree $2 k$, we then have 
 \[
  E_* \cong \BW \big( \cF_5 \big) \lb h \rb [u^{\pm 1}] 
 \]
 with $|h| = 0$ and $|u| = -2$.  In particular, by Hensel's lemma, both $a$ and 
 $\d$ are contained in $\big( E_0 \big)^{\!\times}$.  Moreover, $u$ corresponds 
 to a coordinate on $\HCC_4$ via a chosen isomorphism 
 $\Spf E^0 \BC\BP^\infty \cong \HCC_4$ of formal groups over $\Spf E^0 \cong 
 \Spf\!\left( {S_4}^\wedge_{(5,H)} \otimes_{\BW(\BF_5)} 
 \BW \big( \cF_5 \big) \!\right)$ (cf.~\cite[Definition 1.2]{AHS01}).  
\end{ex}

\begin{rmk}
 \label{rmk:abuse}
 As an abuse of notation, in Examples \ref{ex:4} and \ref{ex:5} we have written 
 \[
  u = \frac{~\!x~\!}{y} 
 \]
 where $x$ and $y$ are the affine coordinates in the Weierstrass equation for 
 $\CC_N$.  The resulting algebraic degree $-1$ of $u$ matches the topological 
 degree $-2$ of $u$ in $E_*$.  
\end{rmk}

The process in the previous example applies to all pairs $p$ and $N$.  For 
certain pairs, the supersingular locus contains more than one closed point, 
e.g., the Hasse invariant of $\CC_4$ at $p = 11$ factors as 
$(A^2 + B) (A^8 + 3 A^6 B + 4 A^2 B^3 + B^4)$.  In this case, $H$ lifts one of 
the irreducible factors of the Hasse invariant.  The corresponding closed point 
carries the supersingular elliptic curve whose formal group models $\BG_0 / k$ 
for the $E$-theory.  

Recall that elements in $\MF\big(\G_1(N)\big)$ are functions $f$ on elliptic 
curves $C/R$ equipped with a $\CP_N$-structure $(P_0,\omega)$.  Each value 
$f(C/R, P_0, \omega) \in R$ depends only on the $R$-isomorphism class of the 
triple $(C/R, P_0, \omega)$, and is subject to a modular transformation property 
that encodes the weight of $f$; moreover, its formation commutes with arbitrary 
base change (see, e.g., \cite[Section 1.2]{padicprop}).  

\begin{prop}[cf.~{\cite[Lemma 6.3]{BOSS}}]
 \label{prop:mfe0}
 Let $E$ be a Morava $E$-theory of height $2$ at the prime $p$.  There is a 
 composite 
 \[
  \B_{p,\,N} \co \MF\big(\G_1(N)\big) \hookrightarrow E^0 \times \BZ \to E^0 
 \]
 of ring homomorphisms, where the first map is injective and the second map is 
 the projection.  
\end{prop}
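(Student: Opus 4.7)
The plan is to construct $\B_{p,N}$ by evaluating modular forms on the universal deformation of the $\G_1(N)$-structured supersingular elliptic curve that underlies $E$, then reading off an element of $E^0$ after a degree shift. The construction generalizes Example \ref{ex:mfe0} to an arbitrary prime $p$ and rests on the Serre-Tate and Lubin-Tate universal properties already invoked in Proposition \ref{prop:tmfe}.

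I would begin by fixing a closed point $\mf{p}$ in the supersingular locus of $\Spec S_N$ at $p$ whose residue field carries an isomorphic copy of the curve $C_0/\cF_p$ modelling $\BG_0/k$. Completing $S_N$ at the maximal ideal corresponding to $\mf{p}$ and base-changing from $\BZ_p = \BW(\BF_p)$ to $\BW(\cF_p)$ yields a graded ring isomorphic, via the Serre-Tate theorem, to $E_* \cong E^0[u^{\pm 1}]$, with the $2$-periodic unit $u$ corresponding to the chosen invariant one-form $\o = du$ on $\HCC_N$ (compare Remark \ref{rmk:abuse}). Under this identification the image of a weight-$k$ modular form $f$ sits in algebraic degree $k$, so that $u^k f$ lies in $E^0$. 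One then defines the first map on each homogeneous piece by $f \mapsto (u^k f, k)$, with the $\BZ$-coordinate tracking the weight so that multiplication respects the graded product $(u^k f)(u^l g) = u^{k+l}(fg)$. The second arrow is the projection onto the first coordinate, which corresponds to setting $u = 1$ inside $E^0[u^{\pm 1}]$. Compatibility with products is immediate from the universal property of the representable moduli problem $\CP_N$: the product of two forms evaluates to the product of their values on any $\CP_N$-triple, and weights add.

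The main obstacle will be injectivity of the first map. Since $S_N$ is a localization of $\BZ[1/N][A, B]$ it is a Noetherian domain, so $S_N \hookrightarrow (S_N)_\mf{p}$, and Krull's intersection theorem gives $(S_N)_\mf{p} \hookrightarrow (S_N)^\wedge_\mf{p}$. Faithful flatness of $\BW(\cF_p)$ over $\BZ_p$ preserves injectivity under the subsequent base change, and because the $\BZ$-coordinate separately records the weight, no cancellation can occur between forms of different weights in the target. For primes where the specific curves $\CC_N$ of Examples \ref{ex:4} and \ref{ex:5} are not smooth or where the supersingular locus at $p$ contains more than one point, one passes through a suitable alternative $\CP_{N'}$-model as in the paragraph following Example \ref{ex:mfe0} and transports the conclusion along the isomorphism of $E$-theories guaranteed by Remark \ref{rmk:N}.
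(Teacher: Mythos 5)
Your construction is the same as the paper's: a weight-$k$ form $f$ is sent to $\big( u^k \cdot f(\CC_N, P_0, du), \, k \big)$ using the model of Example \ref{ex:mfe0} (with the appropriate $\CP_N$-model and supersingular point chosen via Remark \ref{rmk:N}), and the second map is the projection. The only difference is that where the paper disposes of injectivity by appealing to the universal property of the triple $(\CC_N, P_0, du)$, i.e., to the identification $\MF\big(\G_1(N)\big) \cong S_N$, you additionally spell out why $S_N$ injects into its completed localization and survives the faithfully flat base change to $\BW\big(\cF_p\big)$ --- a correct and slightly more detailed account of the same step.
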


\begin{proof}
 In view of Remark \ref{rmk:N}, given a model for the $E$-theory as in Example 
 \ref{ex:mfe0}, a modular form $f$ of weight $k$ maps to 
 $\big( u^k \cdot \, f(\CC_N, P_0, du), ~ k \big) \in E^0 \times \BZ$.  The 
 injectivity of this map follows from the universal property of the triple 
 $(\CC_N, P_0, du)$.  
\end{proof}

We will drop the subscripts in ``$\B_{p,\,N}$'' when there is no ambiguity.

\section{From classical to topological Hecke operators}
\label{sec:ct}

Morava $E$-theory spectra are $E_\infty$-ring spectra \cite[Corollary 7.6]{GH}, 
and thus they come equipped with power operations.  The work of Ando, Hopkins, 
and Strickland \cite{AHS04} (cf.~\cite[Theorem B]{cong}) sets up a 
correspondence between power operations on an $E$-theory and deformations of 
Frobenius isogenies of formal groups.  

At height 2, the Serre-Tate theorem \cite{LST} (cf.~\cite[Theorem 2.9.1]{KM}) 
sets up a second correspondence between isogenies of formal groups and isogenies 
of elliptic curves, in terms of their deformation theory.  

Via these two bridges, the classical action of Hecke operators on modular forms 
then corresponds to an action of ``topological'' Hecke operators on 
$E$-cohomology \cite[Section 14]{log}.  The purpose of this section is to give 
an explicit comparison between these two actions.  In particular, we will 
explain why the ring homomorphism $\B$ in Proposition \ref{prop:mfe0} 
{\em fails} to become a map of modules over a Hecke algebra.

\subsection{Isogenies and power operations}
\label{subsec:po}

Let $\CC_N$ over $S_N$ be the universal curve in Section \ref{subsec:model} for 
the moduli problem $\CP_N$.  Denote by $\CG_N^{(p)}$ the universal example of a 
degree-$p$ subgroup of $\CC_N$.  It is defined over an extension ring 
$S_N^{(p)}$ that is free of rank $p + 1$ as an $S_N$-module 
\cite[Theorem 6.6.1]{KM}.  Explicitly, 
\[
 S_N^{(p)} \cong S_N[\K] / \big(W(\K)\big) 
\]
where $\K$ is a generator that satisfies $W(\K) = 0$ for a monic polynomial $W$ 
of degree $p + 1$.  The roots $\K_0$, $\K_1$, \ldots, $\K_p$ of $W$ each 
correspond to a degree-$p$ subgroup of $\CC_N$.  In particular, let $\K_0$ 
correspond to the subgroup whose restriction over an ordinary point is the 
unique degree-$p$ subgroup of $\HCC_N$.  

We write 
\[
 \Psi_N^{(p)} \co \CC_N \to \CC_N / \CG_N^{(p)} 
\]
for the universal degree-$p$ isogeny over $S_N^{(p)}$, and write $\CC_N^{(p)}$ 
for the quotient curve $\CC_N / \CG_N^{(p)}$.  Following Lubin 
\cite[proof of Theorem 1.4]{Lubin67}, we construct $\Psi_N^{(p)}$ as a 
deformation of Frobenius, that is, over any closed point in the supersingular 
locus at $p$, $\Psi_N^{(p)}$ restricts as the $p$-power Frobenius endomorphism 
on the corresponding supersingular elliptic curve.  

\begin{cstr}[{cf.~\cite[proof of Theorem 1.4]{Lubin67} and 
\cite[Section 7.7]{KM}}]
 \label{cstr}
 Let $P$ be any point on $\CC_N$.  Write $u = x / y$ and $v = 1 / y$, where $x$ 
 and $y$ are the usual coordinates in an affine Weierstrass equation with the 
 identity $O$ at the infinity.  We have $\big( u(O), v(O) \big) = (0,0)$, and 
 $u$ is a local uniformizer at $O$.  
 \begin{enumerate}[(i)]
  \item  Define $\Psi_N^{(p)} \co \CC_N \to \CC_N^{(p)}$ by the formula 
  \[
   u\big(\Psi_N^{(p)}(P)\big) \ce \prod_{Q \in \CG_N^{(p)}} u(P - Q) 
  \]

  \item \label{K}  Define $\K \in S_N^{(p)}$ in degree $-p + 1$ as 
  \[
   \K \ce \prod_{Q \, \in \, \CG_N^{(p)} \setminus \! \{O\}} u(Q) 
  \]
 \end{enumerate}
\end{cstr}

We verify that the isogeny $\Psi_N^{(p)}$ has kernel precisely the subgroup 
$\CG_N^{(p)}$.  Moreover, it is a deformation of Frobenius, since at a 
supersingular point the $p$-divisible group is formal so that $Q = O$ for all 
$Q \in \CG_N^{(p)}$.  

\begin{rmk}
 \label{rmk:K}
 The element $\K$ in Construction \ref{cstr}\,\eqref{K} is the ``norm'' 
 parameter for the moduli problem $[\G_0(p)]$ as an ``open arithmetic surface'' 
 (see \cite[Section 7.7]{KM}).  As $u$ is a local coordinate near the identity 
 $O$, we note that by the above construction the cotangent map 
 $\big(\Psi_N^{(p)}\big)^*$ at $O$ sends $du$ to $\K du$.  
\end{rmk}

Via completion at a supersingular point, we see in Section \ref{subsec:mfe0} 
that the ring $S_N \cong \MF\big(\G_1(N)\big)$ representing $\CP_N$ is locally 
realized in homotopy theory as $E^0$.  Here, given the ring $S_N^{(p)}$ 
representing the simultaneous moduli problem $\big(\CP_N,[\G_0(p)]\big)$, 
Strickland's theorem \cite[Theorem 1.1]{Str98} identifies the completion of 
$S_N^{(p)}$ at the supersingular point as $E^0(B\Sigma_p) / I$, where $I$ is a 
transfer ideal.  In particular, $E^0(B\Sigma_p) / I$ is free over $E^0$ of rank 
$p + 1$, isomorphic to $E^0[\A] / \big( w(\A) \big)$ for a monic polynomial $w$ 
of degree $p + 1$.  

The universal degree-$p$ isogeny $\Psi_N^{(p)} \co \CC_N \to \CC_N^{(p)}$ is 
constructed above as a deformation of Frobenius.  By \cite[Theorem B]{cong} it 
then corresponds to a {\em total power operation} 
\begin{equation}
 \label{psi}
 \psi^p \co A^0 \to A^0(B\Sigma_p) / J 
\end{equation}
where $A$ is any $K(2)$-local commutative $E$-algebra, and $J$ is the 
corresponding transfer ideal.  Since $E^0(B\Sigma_p)$ is free over $E^0$ of 
finite rank \cite[Theorem 3.2]{Str98}, we have $J \cong A^0 \otimes_{E^0} I$ and 
\[
 A^0(B\Sigma_p) / J \cong \big( A^0 \otimes_{E^0} E^0(B\Sigma_p) \big) / J \cong 
 A^0 \otimes_{E^0} \big( E^0(B\Sigma_p) / I \big) \cong 
 A^0[\A] / \big( w(\A) \big) 
\]
Note that up to isomorphism, $\psi^p$ is independent of the choice of $N$ (see 
Remark \ref{rmk:N}).  Moreover, taking quotient by the transfer ideal makes this 
operation additive and hence a local ring homomorphism.  

\begin{ex}
 \label{ex:po}
 We continue Example \ref{ex:mfe0} with $p = 5$ and $N = 4$.  

 The universal degree-$5$ isogeny $\Psi_4^{(5)} \co \CC_4 \to \CC_4^{(5)}$ is 
 defined over the graded ring $S_4^{(5)} \cong S_4[\K] / \big(W(\K)\big)$, where 
 $|\K| = -4$ and 
 \begin{equation}
  \label{W}
  W(\K) = \K^6 - \frac{10}{B^2} \K^5 + \frac{35}{B^4} \K^4 - \frac{60}{B^6} \K^3 
  + \frac{55}{B^8} \K^2 - \frac{H}{B^{12}} \K + \frac{5}{B^{12}} 
 \end{equation}
 This polynomial is computed from the division polynomial $\psi_5$ in 
 \cite[Exercise 3.7]{AEC} for the curve $\CC_4$.  We first deduce from $\psi_5$ 
 identities satisfied by the $uv$-coordinates of a universal example 
 $Q \in \CG_4^{(5)} \!\setminus\!\! \{O\}$ as in 
 \cite[proof of Proposition 2.2]{p3}.  We then compute an explicit formula for 
 \[
  \K = u(Q) \cdot u(-Q) \cdot u(2 Q) \cdot u(-2 Q) 
 \]
 using methods analogous to \cite[III.2.3]{AEC}.  Finally we solve for a monic 
 degree-6 equation satisfied by $\K$ and obtain the polynomial $W$.  (We do not 
 compute an equation for $\CC_4^{(5)}$ as in \cite[Proposition 2.3]{p3}.)  

 Passing to the corresponding power operation, we write 
 \begin{equation}
  \label{A}
  \A \ce u^{-4} \K_0 
 \end{equation}
 (see Remark \ref{rmk:abuse}) where $\K_0$ corresponds to the subgroup of 
 $\CC_4$ whose restriction over an ordinary point is the unique degree-$5$ 
 subgroup of $\HCC_4$.  The total power operation 
 $\psi^5 \co E^0 \to E^0(B\Sigma_5) / I$ then lands in 
 \[
  E^0(B\Sigma_5) / I \cong \BW \big( \cF_5 \big) \lb h, \A \rb / \big(w(\A)\big) 
 \]
 where $w(\A) = \A^6 - 10 \A^5 + 35 \A^4 - 60 \A^3 + 55 \A^2 - h \A + 5$.  To 
 compute the effect of $\psi^5$ on the generator $h \in E^0$, we proceed as 
 follows.  

 Consider a second universal degree-$5$ isogeny 
 \[
  \quad \widetilde{\Psi}_4^{(5)} \co \CC_4^{(5)} \to 
  \CC_4^{(5)} / \widetilde{\CG}_4^{(5)} 
 \]
 where $\widetilde{\CG}_4^{(5)} = \CC_4[5] / \CG_4^{(5)}$.  It is defined the 
 same way as in Construction \ref{cstr}, with a parameter 
 $\widetilde{\K} \in S_4^{(5)}$ in degree $-20$.  Over $S_4^{(5)}$, the 
 assignment 
 \[
  \qquad\qquad~ \left(\! \big( \CC_4, P_0, du \big), ~ \CG_4^{(5)} \right) 
  \mapsto \left(\! \big( \CC_4^{(5)}, \Psi_4^{(5)}(P_0), du \big), 
  ~ \widetilde{\CG}_4^{(5)} \right) 
 \]
 is an involution on the moduli problem $\big(\CP_4,[\G_0(5)]\big)$ 
 (cf.~\cite[11.3.1]{KM}).  In particular, by rigidity, we have the identity 
 \[
  \widetilde{\Psi}_4^{(5)} \circ \Psi_4^{(5)} = [5] \quad~~ 
 \]
 that lifts $\Frob^2 = [5]$ over the supersingular point, where $\Frob$ is the 
 5-power Frobenius endomorphism.  Thus in view of Remark \ref{rmk:K} we obtain a 
 relation 
 \[
  \quad~ \widetilde{\K} \cdot \K = \frac{5}{B^{12}} 
 \]
 in $S_4^{(5)}$ (cf.~\cite[proof of Corollary 3.2]{p3}).  

 Correspondingly, there is an involution 
 \[
  \quad~~ (h, \A) \, \mapsto \, (\widetilde{h}, \widetilde{\A}) 
 \]
 on $E^0(B\Sigma_5) / I$, coming from the Atkin-Lehner involution of modular 
 forms on $\G_0(5)$ (cf.~\cite[Lemmas 7--10]{AtkinLehner}).  In particular, the 
 relation 
 \begin{equation}
  \label{w}
  \A^6 - 10 \A^5 + 35 \A^4 - 60 \A^3 + 55 \A^2 - h \A + 5 = 0 
 \end{equation}
 has an analog 
 \begin{equation}
  \label{w'}
  \widetilde{\A}^6 - 10 \widetilde{\A}^5 + 35 \widetilde{\A}^4 
  - 60 \widetilde{\A}^3 + 55 \widetilde{\A}^2 - \widetilde{h} \widetilde{\A} + 5 
  = 0 
 \end{equation}
 and we have 
 \begin{equation}
  \label{AA'}
  \widetilde{\A} \cdot \A = 5 
 \end{equation}
 Based on these, we compute that 
 \begin{equation}
  \label{psi5}
  \begin{split}
   \psi^5(h) = & ~ \widetilde{h} \\
             = & ~ \widetilde{\A}^5 - 10 \widetilde{\A}^4 + 35 \widetilde{\A}^3 
                 - 60 \widetilde{\A}^2 + 55 \widetilde{\A} + \A 
                 \qquad\qquad \text{by \eqref{w'} and \eqref{AA'}} \\
             = & ~ (-\A^5 + 10 \A^4 - 35 \A^3 + 60 \A^2 - 55 \A + h)^5 \\
               & - 10 (-\A^5 + 10 \A^4 - 35 \A^3 + 60 \A^2 - 55 \A + h)^4 \\
               & + 35 (-\A^5 + 10 \A^4 - 35 \A^3 + 60 \A^2 - 55 \A + h)^3 \\
               & - 60 (-\A^5 + 10 \A^4 - 35 \A^3 + 60 \A^2 - 55 \A + h)^2 \\
               & + 55 (-\A^5 + 10 \A^4 - 35 \A^3 + 60 \A^2 - 55 \A + h) \\
               & + \A 
                 \qquad\qquad\qquad\qquad\qquad\qquad\qquad\qquad\qquad 
                 \text{by \eqref{AA'} and \eqref{w}} \\
             = & ~ h^5 - 10 h^4 - 1065 h^3 + 12690 h^2 + 168930 h \\
               & - 1462250 + (-55 h^4 + 850 h^3 + 39575 h^2 \\
               & - 608700 h - 1113524) \A + (60 h^4 - 775 h^3 \\
               & - 45400 h^2 + 593900 h + 2008800) \A^2 + (-35 h^4 \\
               & + 400 h^3 + 27125 h^2 - 320900 h - 1418300) \A^3 \\
               & + (10 h^4 - 105 h^3 - 7850 h^2 + 86975 h \\
               & + 445850) \A^4 + (-h^4 + 10 h^3 + 790 h^2 - 8440 h \\
               & - 46680) \A^5 
                 \qquad\qquad\qquad\qquad\qquad\qquad\qquad~~~ 
                 \text{by \eqref{w}} 
  \end{split}
 \end{equation}
 We also have $\psi^5(c) = F c$ for $c \in \BW \big( \cF_5 \big)$, where $F$ is 
 the Frobenius automorphism.  As $\psi^5$ is a local ring homomorphism, 
 \eqref{psi5} then determines $\psi^5(x)$ for all 
 $x \in E^0 \cong \BW \big( \cF_5 \big) \lb h \rb$.  
\end{ex}

The previous example illustrates a general recipe for computing power operations 
on a Morava $E$-theory at height 2 and prime $p$, with a model based on the 
moduli problem $\big(\CP_N,[\G_0(p)]\big)$.  Crucial in this computation is an 
explicit expression for 
\begin{equation}
 \label{generalw}
 w(\A) = \A^{p + 1} + w_p \A^p + \cdots + w_1 \A + w_0 \in E^0[\A] 
\end{equation}
The coefficients have the following properties.  
\begin{itemize}
 \item For $i \neq 1$, each $w_i$ is divisible by $p$.  

 \item The coefficient $w_1$ is a lift of the Hasse invariant at $p$, so we can 
 always choose $h = -w_1$ and have $w(\A) \equiv \A (\A^p - h) \md p$.  

 \item We have $w_0 = \widetilde{\A} \cdot \A = \mu p$ for some unit 
 $\mu \in E^0$ that depends on the Frobenius endomorphism of the supersingular 
 elliptic curve (cf.~\cite[3.8]{mc1}).  
\end{itemize}
Computing $w(\A)$ appears to be essentially computing a ``modular equation'' 
(see, e.g., \cite[II.6]{Milne} and \cite{MO}, and compare the {\em canonical 
modular polynomials} in 
\href{http://magma.maths.usyd.edu.au/magma/handbook}{Magma}'s Modular Polynomial 
Databases).  

\begin{rmk}
 \label{rmk:invl}
 It does not work in general to compute $\psi^p(h)$ based solely on the 
 involution of $w(\A)$ and the relation 
 \begin{equation}
  \label{w0}
  \widetilde{\A} \cdot \A = w_0 
 \end{equation}
 The coefficients $w_i$, $2 \leq i \leq p$ may involve $h$ (in terms of 
 functions of $a$).  For example, working with $\CP_3$ instead of $\CP_4$ for 
 the prime 5, we have 
 \[
  w(\A) = \A^6 - 5 a \A^4 + 40 \A^3 - 5 a^2 \A^2 - h \A - 5 
 \]
 with $h = -a^4 + 19 a$.  In this case, in order to derive $\psi^p(h)$, it seems 
 unavoidable to compute an explicit equation for the quotient curve 
 $\CC_N^{(p)}$ (see, e.g., \cite[proof of Proposition 2.3]{p3}).  
\end{rmk}

\subsection{Hecke operators}
\label{subsec:ho}

We first describe the classical action of Hecke operators on modular forms in 
terms of isogenies between elliptic curves.  The $p$'th Hecke operator $T_p$ 
that acts on $\MF\big(\G_1(N)\big)$ can be built from universal isogenies as 
follows.  

\begin{cstr}[{cf.~\cite[(1.11.0.2)]{padicprop}}]
 Let the notation be as in Section \ref{subsec:po}, with the subscripts ``$N$'' 
 suppressed.  Given any $f \in \MF\big(\G_1(N)\big)$ of weight $k \geq 1$, 
 $T_p~f \in \MF\big(\G_1(N)\big)$ is a modular form such that 
 \begin{equation}
  \label{Tp}
  T_p~f(\CC_S, P_0, du) \ce \frac{1}{p} \sum_{i = 0}^p \K_i^k \cdot 
  f\!\left(\CC_{S^{(p)}}/\CG^{(p)}_i, \Psi^{(p)}_i(P_0), du\right) 
 \end{equation}
 where each $\CG^{(p)}_i$ denotes a degree-$p$ subgroup of $\CC$ over $S^{(p)}$, 
 and $\Psi^{(p)}_i$ is the quotient map with kernel $\CG^{(p)}_i$ as in 
 Construction \ref{cstr}.  
\end{cstr}

\begin{rmk}
 \label{rmk:normalizing}
 The terms $\K_i^k$ appear in the above formula so that $T_p$ is independent of 
 the choice of a basis for the cotangent space.  Explicitly, we have 
 \[
  \big( \Psi^{(p)}_i \big)^* du = \K_i \cdot du \qquad \ad \qquad 
  \big( \Psi^{(p)}_i \big)^* 
  \big( \widecheck{\Psi}^{\raisebox{-5pt}{$\scriptstyle(p)$}}_i \big)^* du = 
  p \cdot du 
 \]
 where each $\widecheck{\Psi}^{\raisebox{-5pt}{$\scriptstyle(p)$}}_i \co 
 \CC/\CG^{(p)}_i \to \CC$ is a dual isogeny, and 
 $\big( \widecheck{\Psi}^{\raisebox{-5pt}{$\scriptstyle(p)$}}_i \big)^* du$ is 
 the choice in \cite{padicprop} for a nonvanishing one-form on a quotient curve 
 (cf.~Remark \ref{rmk:K} and the discussion before (1.11.0.0) in 
 \cite[Section 1.11]{padicprop}).  Thus we rewrite \eqref{Tp} as 
 \begin{equation*}
  \begin{split}
     & ~ \frac{1}{p} \sum_{i = 0}^p \K_i^k \cdot 
       f\!\left(\CC_{S^{(p)}}/\CG^{(p)}_i, \Psi^{(p)}_i(P_0), du\right) \\
   = & ~ \frac{1}{p} \sum_{i = 0}^p \K_i^k \cdot 
       f\!\left(\CC_{S^{(p)}}/\CG^{(p)}_i, \Psi^{(p)}_i(P_0), (\K_i/p) 
       \big(\widecheck{\Psi}^{\raisebox{-4pt}{$\scriptstyle(p)$}}\big)^* 
       du\right) \\
   = & ~ \frac{1}{p} \sum_{i = 0}^p p^k \cdot 
       f\!\left(\CC_{S^{(p)}}/\CG^{(p)}_i, \Psi^{(p)}_i(P_0), 
       \big(\widecheck{\Psi}^{\raisebox{-4pt}{$\scriptstyle(p)$}}\big)^* 
       du\right) 
  \end{split}
 \end{equation*}
 and this agrees with \cite[(1.11.0.2)]{padicprop}.  
\end{rmk}

\begin{cstr}[{cf.~\cite[1.12]{log}}]
 There is a {\em topological} Hecke operator $t_p \co E^0 \to p^{-1} E^0$ 
 defined by 
 \begin{equation}
  \label{tp}
  t_p(x) \ce \frac{1}{p} \sum_{i = 0}^p \psi^p_i(x) 
 \end{equation}
 where $\psi^p_i$ denotes the power operation 
 $\psi^p \co E^0 \to E^0(B\Sigma_p) / I \cong E^0[\A] / \big( w(\A) \big)$ with 
 the parameter $\A$ replaced by $\A_i = u^{-p + 1} \K_i$ (cf.~\eqref{A}).  
\end{cstr}

Since the parameters $\A_i$ are the roots of $w(\A) \in E^0[\A]$, $t_p$ indeed 
lands in $p^{-1} E^0$.  

\begin{rmk}
 \label{rmk:tc}
 Given the ring homomorphism $\B$ in Proposition \ref{prop:mfe0}, the 
 topological Hecke operator above does not coincide with the classical one on 
 modular forms.  Comparing \eqref{tp} to \eqref{Tp}, note that there are no 
 terms $\A_i^k$ corresponding to $\K_i^k$.  This is related to the fact that 
 $\B$ is not injective: if we include $\A_i^k$ in the definition, for each 
 $x \in E^0$ we need to determine a unique value of its ``weight'' $k$ so that 
 $t_p$ is well-defined.  

 Note that the map $\B$ is not surjective either, so an element in $E^0$ may not 
 come from any modular form.  On the other hand, the total power operation 
 $\psi^p$ (and hence $t_p$) is defined on the entire $E^0$.  
\end{rmk}

\begin{ex}
 \label{ex:ho}
 Again, let $p = 5$ and $N = 4$.  Recall from Example \ref{ex:mfe0} that we have 
 \[
  \B(\D) = \d = h - 26 
 \]
 In view of \eqref{w}, we then compute from \eqref{psi5} that 
 \begin{equation*}
  \begin{split}
   t_5(\d) = & ~ \frac{1}{5} \sum_{i = 0}^5 \psi^5_i(\d) \\
           = & ~ \frac{1}{5} \sum_{i = 0}^5 \big(\psi^5_i(h) - 26\big) \\
           = & ~ \frac{1}{5} (h^5 - 10 h^4 - 1340 h^3 + 18440 h^2 + 267430 h 
               - 3178396) \\
           = & ~ \frac{1}{5} (h^4 + 16 h^3 - 924 h^2 - 5584 h + 122246) \cdot \d 
  \end{split}
 \end{equation*}
 Given that the modular form $\D$ is of weight 12, we define and compute 
 \begin{equation*}
  \begin{split}
   \widecheck{t}_5(\d) \ce & ~ \frac{1}{5} \sum_{i = 0}^5 \A_i^{12} \cdot 
                             \psi^5_i(\d) \qquad\qquad\qquad\qquad\qquad\qquad
                             \qquad\qquad\quad \\
                         = & ~ 4830 h - 125580 \\
                         = & ~ \T(5) \cdot \d 
  \end{split}
 \end{equation*}
 where $\T \co \BN \to \BZ$ is the Ramanujan tau-function, with $\T(5) = 4830$.  
 This recovers the action of $T_5$ on $\D$.  
\end{ex}

More generally, in the theory of automorphic forms on 
$\G_1(N) \subset \SL_n(\BZ)$, the above Hecke operator $T_p$ can be renamed as 
$T_{1,\,p}$.  It belongs to a family of operators $T_{i,\,p}$, $1 \leq i \leq n$ 
that generate the $p$-primary Hecke algebra (see, e.g., 
\cite[Theorems 3.20 and 3.35]{AF}).  

For the case $n = 2$, the other Hecke operator $T_{2,\,p}$ arises from the 
isogeny of multiplication by $p$, whose kernel is the degree-$p^2$ subgroup 
consisting of the $p$-torsion points.  Explicitly (again with the subscripts 
``$N$'' suppressed), given any $f \in \MF\big(\G_1(N)\big)$ of weight 
$k \geq 2$, there exists $\l \in S^\times$ of degree $-p^2 + 1$ such that 
\begin{equation}
 \label{T2p}
 \begin{split}
  T_{2,\,p}~f(\CC, P_0, du) \ce & ~ \frac{1}{p^2} \Big(\! (p \l)^k \!\cdot 
                                  f\!\left(\CC/\CC[p], \, [p] P_0, \, 
                                  du\right) \!\!\Big) \\
                              = & ~ p^{k - 2} \!\cdot f\!\left(\CC/\CC[p], \, 
                                  [p] P_0, \, \l^{-1} du\right) \\
                              = & ~ p^{k - 2} \!\cdot f(\CC, [p] P_0, du) 
                                  \qquad\qquad\qquad~~ \text{via 
                                  $\CC/\CC[p] \xrightarrow{\sim} \CC$} 
 \end{split}
\end{equation}
For example, when $N = 4$ with $p = 3$ or $p = 5$, we have 
$\lambda = B^{(1 - p^2)/2}$ (cf.~Example \ref{ex:4}).  Via the 
$S^{(p)}$-isomorphism 
\begin{equation}
 \label{iso}
 \CC/\CC[p] \cong \frac{\CC/\CG^{(p)}}{\CC[p]/\CG^{(p)}} \eqqcolon 
 \frac{\CC^{(p)}}{\widetilde{\CG}^{(p)}} 
\end{equation}
the quotient curve $\CC/\CC[p]$ can be identified with the target in the 
composite 
\[
 \CC \xrightarrow{\Psi^{(p)}} \CC^{(p)} \xrightarrow{\widetilde{\Psi}^{(p)}} 
 \CC^{(p)}/\widetilde{\CG}^{(p)} 
\]
of deformations of Frobenius isogenies.  

Correspondingly, given any $K(2)$-local commutative $E$-algebra $A$, there is 
a composite $\f$ of total power operations 
$\psi^p \circ \psi^p \co A^0 \to A^0$.  In view of \eqref{psi}, note that $\phi$ 
lands in $A^0$, because up to the isomorphism \eqref{iso} the composite 
$\widetilde{\Psi}^{(p)} \circ \Psi^{(p)}$ is an endomorphism on $\CC$ over $S$.  
In particular, on $E^0$ or on $E^0(B\Sigma_p) / I$, the operation $\phi$ is the 
identity map, which is a manifest of the Atkin-Lehner involution (cf.~Example 
\ref{ex:po}).  

Taking $A = E$, we define a {\em topological} Hecke operator 
$t_{2,\,p} \co E^0 \to p^{-1} E^0$ by 
\begin{equation}
 \label{t2p}
 t_{2,\,p}(x) \ce p^{-2} \phi(x) = p^{-2} x 
\end{equation}
(we will discuss the general case on $A^0$ in Section \ref{sec:individual}).  
Rewrite $t_{1,\,p} \ce t_p$.  The ring $\BZ[t_{1,\,p},t_{2,\,p}]$ then acts on 
$p^{-1} E^0$.  As we see in Remark \ref{rmk:tc}, along the map 
$\B \co \MF\big(\G_1(N)\big) \to E^0$, this action is not compatible with the 
action of the Hecke algebra $\BZ[T_{1,\,p},T_{2,\,p}]$ on modular forms.  
However, in certain instances, the two actions do interact well.  The next 
section will exploit this connection.

\section{Kernels of logarithmic cohomology operations at height 2}
\label{sec:kerlog}

In Section \ref{subsec:ho}, we give an explicitation for an algebra of 
topological Hecke operators acting on $E^0$, where $E$ is a Morava $E$-theory of 
height 2.  Based on this, we now use a number-theoretic calculation to 
understand a topological construction.  This is Rezk's construction of 
logarithmic cohomology operations via Bousfield-Kuhn functors \cite{log}.  We 
first recall a formula of Rezk for computing these operations, which leads to a 
connection with Hecke operators.

\subsection{Connecting to Hecke operators}

Given any $E_\infty$-ring spectrum $R$, let $\gl_1 R$ be the spectrum of units 
of $R$.  Rezk constructs a family of operations that naturally acts on $\gl_1 R$ 
\cite[Definition 3.6]{log}.  Specifically, given a positive integer $n$ and a 
prime $p$, let $L_{K(n)}$ denote localization with respect to the $n$'th Morava 
$K$-theory at $p$, and let $\Phi_n$ be the corresponding Bousfield-Kuhn functor 
from the category of based topological spaces to the category of spectra.  
Consider the composite 
\begin{equation}
 \label{log}
 \gl_1 R \to L_{K(n)} \gl_1 R \simeq \Phi_n \Omega^\infty \gl_1 R 
 \xrightarrow{\sim} \Phi_n \Omega^\infty R \simeq L_{K(n)} R 
\end{equation}
Note that $\Omega^\infty \gl_1 R$ and $\Omega^\infty R$ have weakly equivalent 
basepoint components, but the standard inclusion 
$\Omega^\infty \gl_1 R \hookrightarrow \Omega^\infty R$ is not 
basepoint-preserving.  The equivalence 
$\Phi_n \Omega^\infty \gl_1 R \to \Phi_n \Omega^\infty R$ thus involves a 
``basepoint shift'' (see \cite[3.4]{log}).  

Let $E$ be a Morava $E$-theory of height $n$ at the prime $p$.  With $R = E$, 
applying $\pi_0(-)$ to \eqref{log}, we then obtain the logarithmic operation 
\[
 \ell_{n,\,p} \co \big( E^0 \big)^{\!\times} \to E^0 
\]
which is a homomorphism from a multiplicative group to an additive group.  More 
generally, let $X$ be a space.  Taking $R$ to be the spectrum of functions from 
$\Sigma^\infty_+ X$ to $E$, we obtain the operation $\ell_{n,\,p}$ that acts on 
$E^0(X)^\times$, naturally both in $X$ and in $E$.  

The main theorem of \cite{log} is a formula for this operation 
\cite[Theorem 1.11]{log}.  In particular, for any 
$x \in \big( E^0 \big)^{\!\times}$, 
\begin{equation}
 \label{M}
 \ell_{n,\,p}(x) = \frac{1}{p} \log\big(1 + p M(x)\big) 
\end{equation}
where $M \co \big( E^0 \big)^{\!\times} \to E^0$ is a cohomology operation that 
can be expressed in terms of power operations $\psi_A$ associated to certain 
subgroups $A$ of $(\BQ_p/\BZ_p)^n$.  Explicitly, 
\[
 1 + p M(x) = \prod_{j=0}^n ~ \prod_{\stackrel{\scriptstyle A \subset 
 (\BQ_p/\BZ_p)^n [p]}{\#A \, = \, p^{\,j}}} 
 \psi_A(x)^{(-1)^{\,j} p^{(j-1)(j-2)/2}} 
\]

Now let the $E$-theory be of height $n = 2$.  In the presence of a model for 
$E$, the operations $\psi_A$ above coincide with the power operations in Section 
\ref{sec:ct}.  In particular, 
\begin{equation}
 \label{l2p}
 \ell_{2,\,p}(x) = \frac{1}{p} \log \left( x^p \cdot 
 \frac{1}{\psi^p_0(x) \cdots \psi^p_p(x)} \cdot \phi(x) \right) 
\end{equation}
Note that $\ell_{2,\,p}(x) = 0$ for any 
$x \in \BZ_p \cap \big( E^0 \big)^{\!\times}$.  

Suppose that $x = \B(\,f)$ for some modular form $f$ of weight $k$ (see 
Proposition \ref{prop:mfe0}).  We next write \eqref{l2p} in terms of Hecke 
operators.  We note the following.  
\begin{itemize}
 \item Recall from \eqref{generalw} that the parameters $\A_i$ in the operations 
 $\psi^p_i$ satisfy 
 \[
  \A_0 \cdots \A_p = (-1)^{p+1} w_0 = (-1)^{p+1} \mu p 
 \]
 for some $\mu \in \big( E^0 \big)^{\!\times}$.  In fact, by a theorem of Ando 
 \cite[Theorem 4]{Ando95}, there exists a unique coordinate on the formal group 
 of $E$ such that its corresponding parameters $\A_i$ satisfy 
 $\A_0 \cdots \A_p = p$.  This is the coordinate $u = x/y$ when $p = 2$ with the 
 $\CP_3$-model \cite[Section 3]{h2p2} and when $p = 5$ with the $\CP_4$-model 
 \eqref{w}.  Henceforth we will choose this particular coordinate for all primes 
 $p$ and denote it by $u$.  

 \item Recall that $\psi^p_i$, $\phi$ and $\B$ are ring homomorphisms.  
 Multiplications by the terms $\K_i^k$ in \eqref{Tp} and by $p^k$ in \eqref{T2p} 
 can be viewed as ring homomorphisms as well.  More precisely, we allow the 
 exponent $k$ to vary, according to the weight of the modular form that 
 immediately follows.  We will denote these ring homomorphisms by $\K_i^\bullet$ 
 and $p^\bullet$.  The topological analog of $\K_i^\bullet$ on homotopy groups 
 is $\A_i^\bullet$.  In particular, given as a formal power series, the usual 
 logarithm ``$\log$'' commutes with these continuous ring homomorphisms.  
\end{itemize}
In Section \ref{subsec:ho}, we give a comparison between classical and 
topological Hecke operators, particularly as illustrated in Example \ref{ex:ho}.  
In view of this comparison and the two observations above, we can now rewrite 
\eqref{l2p} as follows (cf.~\cite[1.12]{log}).  Given $x = \B(\,f)$, 
\begin{equation}
 \label{grdl2p}
 \begin{split}
  \ell_{2,\,p}(x) = & ~ \frac{1}{p} \log \frac{x^p \cdot p^\bullet 
                      \phi(x)}{\prod_{i=0}^p \A_i^\bullet \psi^p_i(x)} \\
                  = & \left( 1 - \frac{1}{p} \sum_{i = 0}^p \A_i^\bullet 
                      \psi^p_i + p \cdot \frac{1}{p^2} p^\bullet \phi \right) 
                      \log x \\
                  = & ~ \B \left( (1 - T_{1,\,p} + p \cdot T_{2,\,p}) 
                      \log f \right) 
 \end{split}
\end{equation}
Following Rezk \cite[1.12]{log}, we write 
\begin{equation}
 \label{FX}
 F_X \ce 1 - T_{1,\,p} \cdot X + p T_{2,\,p} \cdot X^2 \in 
 \BZ[T_{1,\,p},T_{2,\,p}][X] 
\end{equation}
In particular, with $X = 1$, \eqref{grdl2p} becomes 
\begin{equation}
 \label{F1}
 \ell_{2,\,p}(x) = \B \big( F_1 (\log f) \big) 
\end{equation}

\begin{rmk}
 \label{rmk:ratio}
 Let $x = \B(\,f) \in \big( E^0 \big)^{\!\times}$ for some unit $f$ in 
 $\MF\big(\G_1(N)\big)$ of weight $k$ (cf.~\cite{KubertLang}).  The formula 
 \eqref{l2p} expresses $p \ell_{2,\,p}(x)$ as the logarithm of a ratio: the 
 numerator, via $\B$, corresponds to a modular form of weight $p k + p^2 k$, and 
 the denominator corresponds to one having weight $(p + 1) p k = p k + p^2 k$.  
 Thus $p \ell_{2,\,p}(x)$ corresponds to a $p$-adic modular form of weight 0 in 
 view of \eqref{M} (cf.~\cite[Section 10.1]{padicinterp}).  We note the 
 similarity between the logarithm in \cite[10.2.7]{padicinterp} and the one 
 following Theorem 1.9 in \cite{log}.  
\end{rmk}

\begin{ex}
 \label{ex:log}
 We revisit the case $p = 5$, with the model for the $E$-theory given by the 
 moduli problem $\CP_4$.  Consider $\d = \B(\D) \in \big( E^0 \big)^{\!\times}$.  
 As in Example \ref{ex:ho}, we calculate from \eqref{psi5} and \eqref{w} that 
 \begin{equation*}
  \begin{split}
   \ell_{2,\,5}(\d) = & ~ \frac{1}{5} \log \left( \d^5 \cdot 
                        \frac{1}{\psi^p_0(\d) \cdots \psi^p_p(\d)} \cdot 
                        \phi(\d) \right) \\
                    = & ~ \frac{1}{5} \log \left( \d^5 \cdot \frac{1}{\d^6} 
                        \cdot \d \right) \\
                    = & ~ \frac{1}{5} \log 1 \\
                    = & ~ 0 
  \end{split}
 \end{equation*}
 We compare this to the prime-2 case with the $\CP_3$-model.  In 
 \cite[Proposition 3.2]{tmf3}, Mahowald and Rezk show that $\CP_3$ is 
 represented by 
 \[
  y^2 + A x y + B y = x^3 
 \]
 over $\BZ[1/3][A, B, \D^{-1}]$ with $|A| = 1$, $|B| = 3$, and 
 $\D = B^3 (A^3 - 27 B)$.  In \cite[2.8]{h2p2}, Rezk computes that 
 \[
  \ell_{2,\,2}\big(\B_{2,\,3}(\D)\big) = \frac{1}{2} \log(-1) = 0 
 \]
 We interpret the ``$\log$'' here as the 2-adic logarithm (see, e.g., 
 \cite[\S IV.1]{padic} and cf.~\cite[10.2.16]{padicinterp}).  Again, the modular 
 form $\D$ produces an element in the kernel of a logarithmic operation.  

 Moreover, since $\D = B^3 (A^3 - 27 B)$ in this case, we have 
 \[
  \B_{2,\,3}(\D) = (a - 3) (a^2 + 3 a + 9) 
 \]
 with $E^0 \cong \BW \big( \cF_2 \big) \lb a \rb$ (cf.~\cite[Section 4]{h2p2}).  
 Using Rezk's formula for the total power operation on $E^0$, we compute that 
 \[
  \ell_{2,\,2}(a - 3) = \frac{1}{2} \log(-1) = 0 
 \]
 \[
  \ell_{2,\,2}(a^2 + 3 a + 9) = \frac{1}{2} \log 1 = 0 
 \]
 Thus, as $\ell_{2,\,2}$ is a group homomorphism, the formal power series 
 \[
  \frac{1}{a - 3} = -\sum_{i = 0}^\infty 
                    \Big( \sum_{j = 0}^\infty (-2)^{\,j} \Big)^{\!i + 1} a^i ~ 
                    \in \BW \big( \cF_2 \big) \lb a \rb^\times 
 \]
 is also contained in the kernel of $\ell_{2,\,2}$.  

 The above turn out to be instances of a general vanishing result for the 
 logarithms that we discuss next.  
\end{ex}

\subsection{Detecting the kernels}
\label{subsec:thm}

In this section, via the formula \eqref{grdl2p} for a logarithmic operation in 
terms of Hecke operators, we detect a family of elements contained in the kernel 
of this operation.  It includes the elements computed in Example \ref{ex:log}.  

We first review some preliminaries about differential structures on rings of 
modular forms (see, e.g., \cite[\S 5]{1-2-3} and \cite[Section 2.3]{web}).  In 
connection to Morava $E$-theories, we have been considering the $p$-local 
behavior of integral modular forms of level $\G_1(N)$, with $p$ not dividing $N$ 
(which embed into the ring of $p$-adic modular forms).  Thus for our purpose 
these modular forms can equivalently be viewed as over $\BC$.  Henceforth we 
will freely interchange between these two perspectives.  

Recall that there is a differential operator $D$ acting on meromorphic elliptic 
functions over $\BC$.  Specifically, given any meromorphic modular form $f$, it 
has a $q$-expansion 
\[
 f(z) = \sum_{j > -\infty} a_j \, q^{\,j} 
\]
with $a_j \in \BC$, where $q = e^{2 \pi i z}$ as usual.  We then have 
\begin{equation}
 \label{ramanujan}
 D ~\! f \ce \frac{1}{2 \pi i} \frac{df}{dz} = q \frac{df}{dq} 
\end{equation}
If the $q$-expansion of $f$ has coefficients in $\BZ$, so does the $q$-series 
for $D ~\! f$.  

In general, the function $D ~\! f$ is no longer modular.  There is another 
derivation $\vartheta$ that preserves modularity.  If $f$ has weight $k$, its 
{\em Serre derivative} is defined by 
\begin{equation}
 \label{serre}
 \vartheta ~\! f \ce D ~\! f - \frac{k}{12} \CE_2 \cdot f 
\end{equation}
where $\CE_2(z) = 1 - 24 \sum_{j = 1}^\infty \si_1(\,j\,) \, q^{\,j}$ is the 
quasimodular Eisenstein series of weight 2.  This is a meromorphic modular form 
of weight $k + 2$ (see \cite[Proposition 2.11]{web}, and 
cf.~\cite[Th\'eor\`eme 5(a)]{fmpadiq} for the action of $D$ on $p$-adic modular 
forms).  

\begin{ex}
 \label{ex:D}
 Consider the modular discriminant $\D$, a cusp form of weight 12.  The product 
 expansion 
 \[
  \D = q \prod_{n=1}^\infty (1 - q^n)^{24} 
 \]
 implies that 
 \begin{equation}
  \label{logD}
  \begin{split}
   \log \D = & ~ \log q + 24 \sum_{n = 1}^\infty \log(1 - q^n) \\
           = & ~ \log q + 24 \sum_{n = 1}^\infty \sum_{k = 1}^\infty 
               (-1)^{k - 1} \frac{(-q^n)^k}{k} \\
           = & ~ \log q - 24 \sum_{m = 1}^\infty \si_{-1}(m) \, q^{m} 
  \end{split}
 \end{equation}
 Thus $D \log \D = \CE_2$, and hence $\vartheta \D = 0$.  
\end{ex}

\begin{thm}
 \label{thm:kerlog}
 Let $E$ be a Morava $E$-theory of height $2$ at the prime $p$, and let $N > 3$ 
 be any integer prime to $p$.  Let 
 \[
  \ell_{2,\,p} \co \big( E^0 \big)^{\!\times} \to E^0 
 \]
 be Rezk's logarithmic cohomology operation.  Let 
 \[
  \B \co \MF\big(\G_1(N)\big) \to E^0 
 \]
 be the ring homomorphism in Proposition \ref{prop:mfe0}.  Suppose that \!\, 
 $f \in \MF\big(\G_1(N)\big)^{\!\times}$ has trivial Nebentypus character.  If 
 the Serre derivative $\vartheta ~\! f = 0$, then $\B(\,f)$ is contained in the 
 kernel of $\ell_{2,\,p}$.  
\end{thm}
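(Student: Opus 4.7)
The plan is to reduce the statement, via formula \eqref{F1}, to the vanishing of $F_1 = 1 - T_{1,\,p} + p\,T_{2,\,p}$ applied to $\log f$ in the extended sense of Section \ref{sec:logq}, and then to exploit the differential identity that $\vartheta f = 0$ imposes on $\log f$.

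First I would apply \eqref{F1} to obtain $\ell_{2,\,p}\bigl(\B(f)\bigr) = \B\bigl(F_1 \log f\bigr)$, where the right-hand side uses the extension of Hecke operators to logarithmic $q$-series from Section \ref{sec:logq}. It therefore suffices to show that $F_1 \log f$ lies in the kernel of the extended $\B$. I would then repackage the Serre-derivative hypothesis: for $f$ of weight $k$, dividing $\vartheta f = D f - (k/12)\,\CE_2 \cdot f = 0$ through by $f$ yields the differential identity
\[
 D \log f \,=\, \frac{k}{12}\,\CE_2,
\]
where $D = q\,d/dq$, so all information in the hypothesis is captured by this single equation.

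Next I would verify that $F_1$ annihilates $\CE_2$. The classical $q$-expansion identity $T_{1,\,p}\CE_2 = (1+p)\,\CE_2$ follows from the divisor-sum convolution $\sigma_1(pn) + p\,\sigma_1(n/p) = (1+p)\,\sigma_1(n)$ (valid for all $n$, with $\sigma_1(n/p) = 0$ when $p \nmid n$), and the weight-$2$, trivial-Nebentypus specialization of \eqref{T2p} gives $T_{2,\,p}\CE_2 = p^{\,k-2}\,\CE_2 = \CE_2$. Hence $F_1 \CE_2 = \CE_2 - (1+p)\,\CE_2 + p\,\CE_2 = 0$. Since the extension in Section \ref{sec:logq} is calibrated so that $D$ commutes with $T_{1,\,p}$ and $T_{2,\,p}$ on logarithmic $q$-series, I would conclude
\[
 D\bigl(F_1 \log f\bigr) \,=\, F_1\bigl(D \log f\bigr) \,=\, \tfrac{k}{12}\,F_1 \CE_2 \,=\, 0,
\]
so $F_1 \log f$ is a $D$-constant.

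Finally, a Katz-style argument (following the treatment of logarithms of ratios of Siegel functions in \cite[Section 10.1]{padicinterp} and Remark \ref{rmk:ratio}) would identify this $D$-constant, after applying $\B$, with the $p$-adic logarithm of a unit in $\BZ_p$, which vanishes. Concretely, using \eqref{l2p} one writes $p\,\ell_{2,\,p}\bigl(\B(f)\bigr)$ as the logarithm of a ratio of modular-form-like quantities of matched weights; the $D$-annihilation of $F_1 \log f$ forces the ratio to a constant $p$-adic unit on which the $p$-adic logarithm is zero (compare the evaluations $\log(-1) = 0$ and $\log 1 = 0$ carried out in Example \ref{ex:log}). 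The main obstacle will be handling the $\log q$ piece of $\log f$: because $f$ carries a nonzero $q$-order $v$ at the cusp, $\log f = v \log q + (\text{power series in } q)$, and the extensions of $T_{1,\,p}$ and $T_{2,\,p}$ to $\log q$ must be arranged so that $D \circ F_1 = F_1 \circ D$ genuinely holds and so that the resulting $D$-constant transports under $\B$ to a $\BZ_p$-unit. This is precisely what Definition \ref{def:logq} and the Knopp-Mason logarithmic $q$-series framework are designed to resolve, and working through it is the delicate step of the argument.
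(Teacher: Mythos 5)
Your overall strategy matches the paper's: first show that $F_1(\log f)$ is constant using $D\log f = \tfrac{k}{12}\CE_2$ and the Hecke eigenproperties of $\CE_2$, then show the constant dies under the $p$-adic logarithm. However, there are two problems. The first is a fixable misstatement: $D$ does \emph{not} commute with $T_{1,\,p}$ and $T_{2,\,p}$; the correct intertwining is $D \circ T_{i,\,p} = p^{-i}\, T_{i,\,p} \circ D$ (equation \eqref{DT}), so $D(F_1\log f)$ is computed by applying $1 - p^{-1}T_{1,\,p} + p^{-1}T_{2,\,p}$ (not $F_1$ itself) to $\tfrac{k}{12}\CE_2$. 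Your conclusion survives only because this operator also annihilates $\CE_2$ ($1 - \tfrac{1+p}{p} + \tfrac{1}{p} = 0$), but as written your justification is wrong, and the coincidence that both operators kill $\CE_2$ deserves to be checked rather than assumed.

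The second problem is a genuine gap. Knowing $F_1(\log f)$ is constant only places $\ell_{2,\,p}(\B(f))$ in $\BW(\cF_p)$; the $p$-adic logarithm does \emph{not} vanish on arbitrary units (e.g.\ $\log_p(1+p)\neq 0$), only on roots of unity. The entire content of the second half of the paper's proof is an explicit computation showing the constant term of the ratio $x^p\, p^\bullet\phi(x) \big/ \prod_i \A_i^\bullet\psi^p_i(x)$ equals $\zeta_p^{-m(1+p)p/2} = \pm 1$. This uses the Tate-curve description of the $p+1$ order-$p$ subgroups (generated by $\zeta_p$ and by $(\zeta_p^i q^{1/p})^N$), the formulas \cite[(1.11.0.3) and (1.11.0.4)]{padicprop} for how $q$-expansions transform under the corresponding isogenies, and the trivial-Nebentypus hypothesis to control the leading coefficients. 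You correctly flag the $\log q$ term of $\log f$ as the obstacle and point to Definition \ref{def:logq}, but that extended Hecke action is only used in the paper to give a \emph{second} proof in the single case $f = \D$ (Example \ref{ex:pf}); for general $f$ the leading-term Tate-curve computation is indispensable and is precisely what your proposal defers. Without it the argument does not establish that the constant is a root of unity, so the proof is incomplete.
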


Our proof consists of two parts.  In the first part (Lemma \ref{lem:const} 
below), we show that $\ell_{2,\,p}\big(\B(\,f)\big)$ is constant, i.e., it is 
the image of a constant modular form under $\B$.  This is based on an interplay 
between the differential structures and the action of Hecke operators on modular 
forms.  Looking at $q$-expansions, we then show in the second part that this 
constant equals zero.  It boils down to an analysis of the behavior of Tate 
curves under isogenies.  

\begin{lem}
 \label{lem:const}
 Given any $f \in \MF\big(\G_1(N)\big)^{\!\times}$ such that 
 $\vartheta ~\! f = 0$, the elliptic function $F_1(\log f)$ is constant, where 
 $F_1 = 1 - T_{1,\,p} + p T_{2,\,p}$ is the operator defined in \eqref{FX}.  
\end{lem}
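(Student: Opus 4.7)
The plan is to apply the formal derivation $D = q\,\tfrac{d}{dq} = \tfrac{1}{2\pi i}\,\tfrac{d}{dz}$ to $F_1(\log f)$ and show the result is zero. Since $D$ annihilates only constant functions on the connected upper half plane, this will force $F_1(\log f)$ to be constant.

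The first move exploits the hypothesis $\vartheta f = 0$. If $k$ is the weight of $f$, then by \eqref{serre} this means $Df = \tfrac{k}{12}\,\CE_2 \cdot f$, and dividing by $f$ yields the clean identity
\[
 D \log f \;=\; \frac{Df}{f} \;=\; \tfrac{k}{12}\,\CE_2.
\]
Thus $D$ converts the ``logarithmic-modular'' object $\log f$ into the honest quasi-modular form $\tfrac{k}{12}\,\CE_2$ of weight $2$, reducing the problem to the action of Hecke operators on classical forms.

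Next, one records how $D$ intertwines with the Hecke operators on $q$-expansions. From the formula $T_{1,p}(\sum a_n q^n) = \sum \big(a_{pn} + p^{w-1} a_{n/p}\big) q^n$ for weight $w$, and, thanks to the trivial-Nebentypus hypothesis on $f$ (which passes to $\log f$), from $T_{2,p} = p^{w-2}\cdot \mathrm{id}$, a direct coefficient check yields
\[
 D \circ T_{1,p} \;=\; p^{-1}\, T_{1,p} \circ D, \qquad D \circ T_{2,p} \;=\; p^{-2}\, T_{2,p} \circ D,
\]
with each Hecke operator on the right taken with weight shifted by $2$. Applying this to each summand in $F_1$ gives
\[
 D\,F_1(\log f) \;=\; \big(1 - p^{-1} T_{1,p} + p\cdot p^{-2} T_{2,p}\big)(D \log f) \;=\; \tfrac{k}{12}\big(1 - p^{-1} T_{1,p} + p^{-1} T_{2,p}\big)\CE_2.
\]

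To close, one uses the Hecke eigenvalues of $\CE_2$. Although $\CE_2$ is only quasi-modular, the divisor-sum identity $\si_1(pn) + p\,\si_1(n/p) = (1+p)\,\si_1(n)$ translates on $q$-expansions into $T_{1,p}\,\CE_2 = (1+p)\,\CE_2$, while $T_{2,p}$ acts as the identity on weight $2$. The bracket thus collapses to $1 - (1+p)/p + 1/p = 0$, so $D\,F_1(\log f) = 0$ and $F_1(\log f)$ is a constant. The main obstacle I anticipate is giving precise meaning to the Hecke action on $\log f$ itself, in particular to the $\log q$ piece arising at the cusp when $f$ has positive $q$-order; this is exactly the point of the Knopp-Mason framework formalized in Section \ref{sec:logq}, and once it is in place the commutation with $D$ reduces to the same formal coefficient manipulation used above.
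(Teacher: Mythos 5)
Your proposal is correct and follows essentially the same route as the paper's proof: use $\vartheta f = 0$ to get $D\log f = \tfrac{k}{12}\,\CE_2$, commute $D$ past the Hecke operators via $D \circ T_{i,p} = p^{-i}\,T_{i,p} \circ D$, and collapse the resulting bracket using the eigenvalue $\si_1(p) = 1+p$ of $\CE_2$ and the fact that $T_{2,p}$ acts as $p^{k-2} = 1$ in weight $2$. Your closing caveat about making the Hecke action on the $\log q$ term precise is a fair observation; the paper likewise defers that point to the logarithmic $q$-series framework of Section \ref{sec:logq}.
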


\begin{proof}
 Suppose that $f$ is of weight $k$.  By \eqref{serre}, since 
 $\vartheta ~\! f = 0$, we have 
 \[
  D \log f = \frac{D ~\! f}{f} = \frac{k \CE_2}{12} 
 \]
 Note that $\CE_2$ is a weight-2 eigenform for every Hecke operator 
 $T_m = T_{1,\,m}$, $m \geq 1$ with eigenvalue $\si_1(m)$.  

 By comparing the effects on $q$-expansions, we have 
 \begin{equation}
  \label{DT}
  D \circ T_{i,\,p} = \frac{1}{p^i} \cdot T_{i,\,p} \circ D 
 \end{equation}
 for $i = 1$ and $i = 2$.  We then compute that 
 \begin{equation*}
  \begin{split}
   D \big( F_1(\log f) \big) = & ~ D \big( (1 - T_{1,\,p} + p T_{2,\,p}) 
                                 \log f \big) \\
                             = & \left( 1 - \frac{1}{p} \cdot T_{1,\,p} + 
                                 \frac{1}{p^2} \cdot p T_{2,\,p} \right) 
                                 (D \log f) \\
                             = & \left( 1 - \frac{1}{p} \cdot T_{1,\,p} + 
                                 \frac{1}{p^2} \cdot p T_{2,\,p} \right) 
                                 \frac{k \CE_2}{12} \\
                             = & \left( 1 - \frac{1}{p} \cdot (1 + p) + 
                                 \frac{1}{p^2} \cdot p \Big( \frac{1}{p^2} \cdot 
                                 p^2 \Big) \!\! \right) \frac{k \CE_2}{12} \\
                             = & ~ 0 
  \end{split}
 \end{equation*}
 Thus as a function of the complex variable $z$, $F_1(\log f)$ is constant.  
\end{proof}

\begin{proof}[Proof of Theorem \ref{thm:kerlog}]
 Let $x \ce \B(\,f)$.  Since $\B$ is a ring homomorphism, 
 $x \in \big( E^0 \big)^{\!\times}$.  Recall from \eqref{grdl2p} that 
 \begin{equation}
  \label{grdl2pbis}
  \ell_{2,\,p}(x) = \frac{1}{p} \log 
  \frac{x^p \cdot p^\bullet \phi(x)}{\prod_{i=0}^p \A_i^\bullet \psi^p_i(x)} = 
  \B \big( F_1(\log f) \big) 
 \end{equation}
 By Lemma \ref{lem:const}, the above equals an element in 
 $\BW \big( \cF_p \big)^{\!\times}$, which we denote by $c_f$.  

 Suppose that $f$ has weight $k$.  Via the correspondence between power 
 operations and deformations of Frobenius in \cite[Theorem B]{cong}, the ratio 
 of values of power operations on $x$ in \eqref{grdl2pbis} equals a ratio of 
 values of $f$ on the corresponding universal elliptic curves.  Explicitly, with 
 notation as in \eqref{Tp} and \eqref{T2p}, we have 
 \begin{equation}
  \label{ratio}
  \frac{x^p \cdot p^\bullet \phi(x)}{\prod_{i=0}^p \A_i^\bullet \psi^p_i(x)} = 
  \frac{f(\CC_S, P_0, du)^p \cdot p^k f(\CC_S, [p] P_0, du)}{\prod_{i=0}^p 
  \K_i^k ~ f\!\big(\CC_{S^{(p)}}/\CG^{(p)}_i, \Psi^{(p)}_i(P_0), du\big)} 
 \end{equation}
 To determine $c_f$, we need only inspect the constant term in a $q$-expansion 
 for the right-hand side.  

 Over a punctured formal neighborhood of each cusp, the universal curve $\CC$ is 
 isomorphic to the Tate curve $\Tate(q^N)$ with the level-$\G_1(N)$ structure 
 corresponding to that cusp.  The universal degree-$p$ isogeny on $\Tate(q^N)$ 
 is defined over the ring $\BZ [1/pN, \zeta_p] \lp q^{1/p} \rp$, where $\zeta_p$ 
 is a primitive $p$'th root of unity (see 
 \cite[Sections 1.2, 1.4, and 1.11]{padicprop}).  In particular, the $(p + 1)$ 
 subgroups of order $p$ are 
 \[
  \qquad~~ \text{$G^{(p)}_0$,~~~generated by $\zeta_p$,} \qquad \ad \qquad 
  \text{$G^{(p)}_i$, $1 \leq i \leq p$,~~~generated by $(\zeta_p^i q^{1/p})^N$} 
 \]
 Let $\sum_{j = m}^\infty a_j \, q^{\,j}$ be a $q$-expansion of $f$.  We now 
 compare as follows the lowest powers of $q$ at the denominator and the 
 numerator of \eqref{ratio}.  

 By \cite[(1.11.0.3) and (1.11.0.4)]{padicprop} (cf.~Section 
 \ref{subsec:correction}), including a normalizing factor $p^k$ in place of 
 $\K_i^k$ (see Remark \ref{rmk:normalizing}), we have at the denominator a 
 leading term as the product of 
 \[
  p^k a_m (q^p)^m \qquad \ad \qquad p^k \left( p^{-k} a_m \big( \zeta_p^i 
  q^{1/p} \big)^m \right) 
 \]
 where $i$ runs from 1 to $p$.  Note that since the Nebentypus character of $f$ 
 is trivial, the coefficient $a_m$ is independent of where the level-$\G_1(N)$ 
 structure goes under each degree-$p$ isogeny.  

 At the numerator, for the first factor, we have a leading term $(a_m q^m)^p$.  
 For the second factor, we have a leading term $p^k a_m q^m$, again under the 
 assumption that $f$ has trivial Nebentypus character.  

 Combining these, we see that the ratio has a leading constant term 
 \[
  \frac{(a_m q^m)^p \cdot p^k a_m q^m}{p^k a_m (q^p)^m \cdot \prod_{i=1}^p p^k 
  \Big( p^{-k} a_m \big( \zeta_p^i q^{1/p} \big)^m \Big)} = 
  \zeta_p^{-m (1 + p) p / 2} = \left\{\!\!
  \begin{array}{cl}
    (-1)^m & \quad \text{if $p = 2$} \\
    1 & \quad \text{if $p$ is odd} 
  \end{array}
  \right.
 \]
 Thus applying the $p$-adic logarithm (cf.~Example \ref{ex:log}) we get 
 $c_f = 0$.  
\end{proof}

\begin{rmk}
 Let $f$ be any nonzero meromorphic modular form on $\SL_2(\BZ)$.  Bruinier, 
 Kohnen, and Ono give an explicit formula for $\vartheta ~\! f$ as a multiple of 
 $-f$ by a certain function $f_\Theta$ \cite[Theorem 1]{BKO}.  The function 
 $f_\Theta$ encodes a sequence of modular functions $j_m(z)$, $m \geq 0$ defined 
 by applying Hecke operators to the usual $j$-invariant, whose values are of 
 arithmetic and combinatorial significance.  

 In particular, the formula of Bruinier, Kohnen, and Ono immediately shows that 
 a nonzero meromorphic modular form $f$ has vanishing Serre derivative precisely 
 when its zeros and poles are located only at the cusp 
 (cf.~\cite[Proposition 6]{DumasRoyer}).  The functions in Example \ref{ex:log}, 
 and in fact any unit in $\MF\big(\G_1(N)\big)$, all have the latter property, 
 though they are associated to $\G_1(N)$ instead of $\SL_2(\BZ)$.  

 Their theorem has been generalized by Ahlgren to $\G_0(p)$ with 
 $p \in \{2,3,5,7,13\}$ \cite[Theorem 2]{Ahlgren} and further by Choi to 
 $\G_0(n)$ for any $n$ that is square-free \cite[Theorem 3.4]{Choi}.  In view of 
 the assumption on Nebentypus character in Theorem \ref{thm:kerlog}, we note 
 that modular forms of level $\G_1(N)$ with trivial Nebentypus character are 
 precisely those of level $\G_0(N)$.  
\end{rmk}

\section{Extending the action of Hecke operators onto logarithmic $q$-series}
\label{sec:logq}

The purpose of this section is to give an account of elliptic functions of the 
form $\log f$, where $f$ is a meromorphic modular form.  As we have seen in 
Section \ref{sec:kerlog}, such functions arise in computations for logarithmic 
cohomology operations on Morava $E$-theories at height 2, e.g., in 
\eqref{grdl2p}.  

\begin{ex}
 \label{ex:logD}
 Consider the function $\log \D$ in Example \ref{ex:D}.  In the final form of 
 \eqref{logD}, the second summand is a convergent $q$-series.  We may call it an 
 {\em Eisenstein series of weight $0$}, by analogy to $q$-expansions for the 
 usual Eisenstein series of higher weight (also cf.~the real analytic Eisenstein 
 series of weight 0 discussed in \cite[Sections 3.3 and 4.1]{Funke}).  

 The first summand, $\log q$, never shows up in the $q$-expansion of a 
 meromorphic modular form.  It is indeed this term that we will address, given 
 the importance of $\D$ in the context of logarithmic operations (see Theorem 
 \ref{thm:kerlog}).  Specifically, with motivations from homotopy theory, we aim 
 to extend the classical action of Hecke operators on modular forms to 
 incorporate series such as \eqref{logD}.  We will then apply the extended 
 action in Example \ref{ex:pf}.  
\end{ex}

\begin{rmk}
 \label{rmk:eichler}
 Given a cusp form $f = \sum_{n = 1}^\infty a_n q^n$ of weight $k$, its Eichler 
 integral $\widetilde{f\,} = \sum_{n = 1}^\infty n^{-k + 1} a_n q^n$ is a mock 
 modular form of weight $2 - k$ (see the end of \cite[\S 6]{mock}).  Recall from 
 Example \ref{ex:D} that $D \log \D = \CE_2$.  Since 
 $D^{k - 1} \, \widetilde{f\,} = f$, we may then view $\log \D$ as a generalized 
 Eichler integral, ``generalized'' in that $\CE_2$ is not a cusp form.  We may 
 even approach proving Lemma \ref{lem:const} from this viewpoint.  
\end{rmk}

In \cite{KnoppMason}, given a representation 
$\rho \co \SL_2(\BZ) \to \GL_n(\BC)$, Knopp and Mason consider $n$-dimensional 
vector-valued modular forms associated to $\rho$.  In particular, they show in 
\cite[Theorem 2.2]{KnoppMason} that the components of certain vector-valued 
modular forms are functions 
\begin{equation}
 \label{logq}
 f(z) = \sum_{j = 0}^t (\log q)^{\,j} ~\! h_j(z) 
\end{equation}
where $t \geq 0$ is an integer, and each $h_j$ is a convergent $q$-series with 
at worst real exponents 
(cf.~\cite[(7), (13), and Sections 3.2--3.3]{KnoppMason}).  They remark that 
$q$-expansions of this form occur in logarithmic conformal field theory (e.g., 
cf.~\cite[(5.3.9)]{Zhu} and \cite[(6.12)]{DongLiMason}).  The one in 
\eqref{logD} gives another example.  Following Knopp and Mason, we call the 
series in \eqref{logq} a {\em logarithmic} $q$-series.  

\begin{prop}
 \label{prop:logq}
 The action of the Hecke operator $T_p$ on modular forms of level $\G_0(N)$ 
 extends so that 
 \[
  T_p (\log q) = \big( p^{-1} + p^{-2} \big) \log q 
 \]
\end{prop}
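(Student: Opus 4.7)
The plan is to extend the isogeny-theoretic definition of $T_p$ so that it applies to the formal series $\log q$, and then verify the stated identity via the Ramanujan derivation $D = q\,\partial/\partial q$.

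For the setup, recall from the proof of Theorem \ref{thm:kerlog} that over a punctured formal neighborhood of each cusp, the universal curve is $\Tate(q^N)$, and the $p+1$ subgroups of order $p$ are the one generated by $\zeta_p$ (with quotient $\Tate(q^p)$) and the subgroups $\langle(\zeta_p^i q^{1/p})^N\rangle$ for $1 \leq i \leq p$ (with quotients $\Tate(\zeta_p^i q^{1/p})$). Extending \eqref{Tp} to $\log q$ in Definition \ref{def:logq} amounts to forming the appropriately normalized average of $\log$ of these Tate parameters.

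Rather than compute this average directly and struggle with the constants of integration that arise from $\log\zeta_p^i$, I would verify the identity through the derivational compatibility $D \circ T_p = p^{-1}\,T_p \circ D$ from \eqref{DT}, which continues to hold for the extended operator because both sides are built from the same Tate-curve isogeny data. Applying $D$ and using $D(\log q) = 1$:
\begin{equation*}
D\bigl(T_p(\log q)\bigr) = p^{-1}\,T_p(D \log q) = p^{-1}\,T_p(1).
\end{equation*}
The classical Hecke action on the constant $1$, viewed as a weight-$0$ modular form, gives $T_p(1) = 1 + p^{-1}$; this is immediate from the standard $q$-expansion formula $T_p\bigl(\sum a_n q^n\bigr) = \sum_m a_{pm}\,q^m + p^{k-1}\sum_n a_n\,q^{pn}$ with $k = 0$, $a_0 = 1$, and $a_n = 0$ for $n \geq 1$, or equivalently from averaging the isogeny formula over the $p+1$ subgroups. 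Hence $D\bigl(T_p(\log q)\bigr) = p^{-1}(1 + p^{-1}) = p^{-1} + p^{-2}$. Combined with the relation $D(c\log q) = c$ for $c \in \BQ$, this forces $T_p(\log q) = (p^{-1} + p^{-2})\log q$ up to an additive constant.

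The main obstacle is handling this constant of integration. An explicit computation from the Tate-curve formula shows that the constant involves terms of the form $\log\zeta_p^i$, whose sum over $i = 1,\ldots,p$ is a rational multiple of $\pi i$; these contributions can be absorbed either by the multivaluedness of $\log$ (modulo its period $2\pi i$) or by passing to $p$-adic logarithms as in the proof of Theorem \ref{thm:kerlog}. Arranging this cleanly is the content of Definition \ref{def:logq}; once that normalization is in place, the derivational argument above yields the stated identity on the nose.
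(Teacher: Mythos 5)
Your route is genuinely different from the paper's: the paper computes $T_p(\log q)$ directly from Katz's isogeny formula on the Tate curve, whereas you differentiate, use $D\circ T_p = p^{-1}\,T_p\circ D$ and $T_p(1)=1+p^{-1}$, and integrate back. The answer is right, but the argument has two gaps, and they sit exactly where the paper does its real work. First, the eigenvalue you obtain depends entirely on the weight in which $T_p$ acts on the constant $D\log q = 1$: declaring $1$ to be a weight-$0$ form is equivalent to declaring $\log q$ to have weight $-2$, and with a different weight label the same derivational argument would produce a different eigenvalue (e.g.\ weight $0$ for $\log q$ gives $1+p^{-1}$). The paper does not assume this; it derives the weight $-2$ from the Kodaira--Spencer isomorphism $\ou^{~\!\otimes 2}\xrightarrow{\sim}\Omega^1_{\CMB_N/\BZ[1/N]}(\text{log cusps})$ of \cite[Theorem 10.13.11]{KM}, under which $\o_\can^2$ corresponds to $N\cdot d(\log q)$ over the cusps, so that rescaling $\o_\can$ by $p$ rescales $\log q$ by $p^2$ (this is \eqref{weight}). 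That geometric input is what makes the extension canonical rather than a choice, and your proof needs it just as much as the paper's does. Relatedly, asserting that $D\circ T_p=p^{-1}\,T_p\circ D$ ``continues to hold for the extended operator'' is circular: in the paper that relation is verified on $q$-expansions, and the $q$-expansion of $T_p(\log q)$ is precisely what is being computed. As written, your argument shows uniqueness of the eigenvalue subject to assumed compatibilities, not that the isogeny-theoretic extension satisfies them.

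Second, the constant of integration is not dispatched. Deferring to Definition \ref{def:logq} is circular, since in the paper that definition is explicitly \emph{based on} Proposition \ref{prop:logq} (via \eqref{weight} and \eqref{Tplogq}); it cannot be used to prove it. The constant is $\frac{1}{p}\sum_{i=1}^p\log\zeta_p^i=\frac{1}{p}\log\zeta_p^{p(p+1)/2}$, and for $p=2$ this is $\frac{1}{2}\log(-1)=\pi i/2$, which is not absorbed by the $2\pi i$-multivaluedness of the complex logarithm. Of your two proposed fixes only the second works: one must interpret $\log$ as the $p$-adic logarithm, so that $\log\zeta_p^i=0$, exactly as the paper does in \eqref{Tplogq} (cf.\ Example \ref{ex:log}). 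With the weight $-2$ established geometrically and the $p$-adic logarithm in place, the direct computation is a three-line calculation, so the detour through $D$ does not actually save work here.
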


\begin{proof}
 Consider modular forms in $\MF\big(\G_1(N)\big)$ with trivial Nebentypus 
 character, i.e., those on $\G_0(N)$.  We follow the modular description in 
 \cite[Section 1.11]{padicprop} for Hecke operators in the presence of the Tate 
 curve $\Tate(q^N)$ over $\BZ [1/pN, \zeta_p] \lp q^{1/p} \rp$, where $\zeta_p$ 
 is a primitive $p$'th root of unity.  

 Write $\CF \ce \log q$.  Let $\o_\can$ be the canonical differential on 
 $\Tate(q^N)$.  Let $\CM_N \ce \Proj S_N$ be the scheme over $\BZ[1/N]$ 
 representing the moduli problem $\CP_N$ (see Examples \ref{ex:4}, \ref{ex:5}, 
 and \ref{ex:mfe0}).  Denote by $\ou \ce {\rm pr}_* \Omega^1_{\CC_N/\CM_N}$ the 
 pushforward along the structure morphism ${\rm pr} \co \CC_N \to \CM_N$ of the 
 relative cotangent sheaf $\Omega^1_{\CC_N/\CM_N}$.  

 By \cite[Theorem 10.13.11]{KM}, the isomorphism 
 \[
  \ou^{~\! \otimes 2} \xrightarrow{\sim} \Omega^1_{\CM_N \big/ \BZ[1/N]} 
 \]
 on $\CM_N$ extends to an isomorphism 
 \[
  \ou^{~\! \otimes 2} \xrightarrow{\sim} 
  \Omega^1_{\CMB_N \big/ \BZ[1/N]} (\text{log cusps}) 
 \]
 on the compactification $\CMB_N$, where the target is the invertible sheaf of 
 one-forms with at worst simple poles along the cusps.  In particular, over the 
 cusps, $\o_\can^2$ corresponds to $N \cdot d\CF$ under this isomorphism 
 (cf.~\cite[Section 1.5]{padicprop}).  Since $\CF = \log q = 2 \pi i z$ is 
 linear in $z$, we have 
 \begin{equation}
  \label{weight}
  \CF \big( \Tate(q^N), ~ P_0, ~ p \cdot \o_\can \big) = 
  p^2 \cdot \CF \big( \Tate(q^N), ~ P_0, ~ \o_\can \big) 
 \end{equation}
 By \cite[(1.11.0.3) and (1.11.0.4)]{padicprop} (cf.~Section 
 \ref{subsec:correction}), given the assumption of trivial Nebentypus character, 
 we then calculate that 
 \begin{equation}
  \label{Tplogq}
  \begin{split}
   T_p (\log q) = & ~ \frac{1}{p}\cdot p^k \left( \log(q^p) + \sum_{i = 1}^p 
                    p^{-k} \log(\zeta_p^i q^{1/p}) \right) \\
                = & ~ p^{k - 1} \left( p \log q + p^{-k} \sum_{i = 1}^p 
                    (\log \zeta_p^i + p^{-1} \log q) \right) \\
                = & ~ p^{k - 1} \left( p \log q + p^{-k} \sum_{i = 1}^p 
                    p^{-1} \log q \right) \\
                = & ~ p^{k - 1} \big( p + p^{-k} \big) \log q 
  \end{split}
 \end{equation}
 where we interpret ``$\log$'' as the $p$-adic logarithm so that 
 $\log \zeta_p^i = 0$ (cf.~Example \ref{ex:log}).  Setting $k = -2$ in view of 
 \eqref{weight}, we obtain the stated identity.  
\end{proof}

\begin{rmk}
 \label{rmk:nebentypus}
 More generally, if we view $\log q$ as generalizing modular forms in 
 $\MF\big(\G_1(N)\big)$ with Nebentypus character $\chi$, we compute as in 
 \eqref{Tplogq} and obtain 
 \[
  T_p (\log q) = \big( p^{-1} + \chi(p) \, p^{-2} \big) \log q 
 \]
 For the rest of this section, we focus on the case when $\chi$ is trivial 
 (cf.~Theorem \ref{thm:kerlog}).  
\end{rmk}

Let $K$ be a number field.  In \cite{BruinierOno}, Bruinier and Ono study 
meromorphic modular forms $g$ on $\SL_2(\BZ)$ with $q$-expansion 
\begin{equation}
 \label{product}
 g(z) = q^m \left( 1 + \sum_{n = 1}^\infty a_n ~\! q^n \right) 
\end{equation}
where $m \in \BZ$ and $a_n \in \CO_K$ (the ring of integers in $K$).  They show 
that if $g$ satisfies a certain condition with respect to a prime $p$, its 
logarithmic derivative $D \log(g)$ is a $p$-adic modular form of weight 2 
\cite[Theorem 1]{BruinierOno} (the differential operator $D$ is defined in 
\eqref{ramanujan}).  

This theorem has been generalized to meromorphic modular forms on $\G_0(p)$ for 
$p \geq 5$ \cite[Theorem 4]{Getz}.  Examples include $\CE_{p - 1}$ at each 
$p \geq 5$ and, for all $p$, meromorphic modular forms whose zeros and poles are 
located only at the cusps (cf.~\cite[Definition 3.1]{BruinierOno}).  In 
particular, when $g = \D$, we have $D \log \D = \CE_2$ (cf.~the discussion 
before \cite[Th\'eor\`eme 5]{fmpadiq}).  

Given any $g$ as in \eqref{product}, note that $\log(g)$ is a logarithmic 
$q$-series.  The following definition is based on Proposition \ref{prop:logq} 
(specifically \eqref{weight} and \eqref{Tplogq}) and the theorem of Bruinier and 
Ono above.  

\begin{defn}
 \label{def:logq}
 \mbox{}
 \begin{enumerate}[(i)]
  \item Given any integer $j \geq 0$, define the {\em weight} of 
  $(\log q)^{\,j}$ to be $-2\,j$.  

  \item \label{ii} Let $g$ be a meromorphic modular form such that $D \log(g)$ 
  is a $p$-adic modular form of weight 2 for all $p$.  Define the {\em weight} 
  of $\log(g)$ to be 0.  

  \item \label{iii} Given any prime $p$ and any logarithmic $q$-series 
  \[
   f = \sum_{j = 0}^t (\log q)^{\,j} ~\! h_j 
  \]
  of weight $k$, define $T_p~f$ as follows.  For each $j$, if 
  \[
   h_j = \sum_{m > -\infty} a_m ~\! q^m 
  \]
  (the index $m$ and the coefficients $a_m$ depend on $j$), define 
  \[
   T_p \! \left( (\log q)^{\,j} ~\! h_j \right) \ce (\log q)^{\,j} 
   \sum_{m > -\infty} b_m ~\! q^m 
  \]
  where the coefficients 
  \[
   b_m = p^{\,j + k - 1} a_{m/p} + p^{-j} a_{p m} 
  \]
  with the convention that $a_{m/p} = 0$ unless $p|m$.  We then define 
  \[
   T_p~f \ce \sum_{j = 0}^t T_p \! \left( (\log q)^{\,j} ~\! h_j \right) 
  \]
 \end{enumerate}
\end{defn}

\begin{rmk}
 \mbox{}
 \begin{enumerate}[(i)]
  \item The definitions of weight above are compatible with the action of the 
  differential operator $D$.  Specifically, we have 
  \[
   D \log q = q \frac{d}{dq} (\log q) = 1 
  \]
  Thus applying $D$ to $\log q$ increases the weight by 2, which agrees with 
  \cite[Th\'eor\`eme 5\,(a)]{fmpadiq}.  More generally, for $j \geq 0$, since 
  $D\,(\log q)^{\,j} = j\,(\log q)^{\,j - 1}$, the same compatibility holds.  

  \item The definition for $T_p \! \left( (\log q)^{\,j} ~\! h_j \right)$ 
  extends \cite[Formula 1.11.1]{padicprop} by a computation analogous to 
  \eqref{Tplogq} under the assumption of trivial Nebentypus character (see 
  Remark \ref{rmk:nebentypus}).  Moreover, the following identities for 
  operators acting on modular forms extend to the series 
  $(\log q)^{\,j} ~\! h_j$: 
  \begin{equation*}
   \begin{split}
         D \circ T_p = & ~ \frac{1}{p} \cdot T_p \circ D \qquad 
                       \text{cf.~\eqref{DT}} \\
    T_\ell \circ T_p = & ~ T_p \circ T_\ell \qquad\quad~ \text{for primes $\ell$ 
                       and $p$} 
   \end{split}
  \end{equation*}
  (assuming that Hecke operators preserve weight).  We can also define the Hecke 
  operators $T_m$ acting on $(\log q)^{\,j} ~\! h_j$ for any positive integer 
  $m$ as in the remark after \cite[Th\'eor\`eme 4]{fmpadiq}.  
 \end{enumerate}
\end{rmk}

\begin{ex}
 \label{ex:pf}
 We now return to Example \ref{ex:logD}.  By Definition 
 \ref{def:logq}\,\eqref{ii}, $\log \D$ is a logarithmic $q$-series of weight 0.  
 In view of \eqref{logD}, we then compute by Definition 
 \ref{def:logq}\,\eqref{iii} that 
 \[
  T_p (\log \D) = \si_{-1}(p) \log \D 
 \]
 Thus in \eqref{F1} we have 
 \[
  F_1 (\log \D) = \left( 1 - \si_{-1}(p) + p^{-1} \right) \log \D = 0 
 \]
 This gives a second proof of Theorem \ref{thm:kerlog} in the case $f = \D$.  
\end{ex}

\section{Topological Hecke operators in terms of individual power operations}
\label{sec:individual}

Constructed from total power operations, the topological Hecke operators in 
\eqref{tp} and \eqref{t2p} can be defined more generally on $E^0 X$ for any 
space $X$.  In this section, we examine their role in the algebra of additive 
$E$-cohomology operations, by expressing them in terms of generators of this 
algebra.  

Let $E$ be a Morava $E$-theory of height $n$ at the prime $p$.  There is an 
algebraic theory---in the sense of Lawvere \cite{Lawvere}---constructed from the 
extended power functors on the category of $E$-modules 
(cf.~\cite[Section 9]{lpo} and \cite[Section 4]{cong}).  It describes all 
homotopy operations on $K(n)$-local commutative $E$-algebras.  

Rezk shows that under a certain congruence condition, a model for this algebraic 
theory amounts to a $p$-torsion-free graded commutative algebra over an 
associative ring $\G$ \cite[Theorem A]{cong}.  Following Rezk, we call $\G$ the 
{\em \DL algebra} of the $E$-theory.  It is constructed in \cite[6.2]{cong} as a 
direct sum of the $E^0$-linear duals of the rings $E^0(B\Sigma_{p^k}) / I$, for 
all $k \geq 0$ (cf.~\eqref{psi}).  

\begin{ex}
 \label{ex:gamma}
 Using the formulas for $\widetilde{h}$ and $\widetilde{\A}$ in \eqref{psi5}, we 
 compute as in \cite[Proposition 3.6]{p3} and obtain a presentation for the \DL 
 algebra $\G$ at height 2 and prime 5.  Specifically, $\G$ is the following 
 graded twisted bialgebra over $E_0 \cong \BW \big( \cF_5 \big) \lb h \rb$ with 
 generators $Q_i$, $0 \leq i \leq 5$.  

 \begin{itemize}
  \item The ``twists'' are given in terms of {\em commutation relations}: 
  \begin{equation*}
   \begin{split}
    Q_i c = & ~ (F c) Q_i ~~~ \text{for $c \in \BW \big( \cF_5 \big)$ and all 
              $i$, with $F$ the Frobenius automorphism} ~~ \\
    Q_0 h = & ~ (h^5 - 10 h^4 - 1065 h^3 + 12690 h^2 + 168930 h - 1462250) Q_0 + (5 h^4 \\
            & - 50 h^3 - 3950 h^2 + 42200 h + 233400) Q_1 + (25 h^3 - 250 h^2 - 12875 h \\
            & + 104750) Q_2 + (125 h^2 - 1250 h - 30000) Q_3 + (625 h - 6250) Q_4 \\
            & + 3120 Q_5 \\
    Q_1 h = & ~ (-55 h^4 + 850 h^3 + 39575 h^2 - 608700 h - 1113524) Q_0 + (-275 h^3 \\
            & + 4250 h^2 + 122250 h - 1462250) Q_1 + (-1375 h^2 + 21250 h \\
            & + 233400) Q_2 + (-6875 h + 104750) Q_3 - 30000 Q_4 + (h - 6250) Q_5 \\
    Q_2 h = & ~ (60 h^4 - 775 h^3 - 45400 h^2 + 593900 h + 2008800) Q_0 + (300 h^3 \\
            & - 3875 h^2 - 144500 h + 1453876) Q_1 + (1500 h^2 - 19375 h \\
            & - 310000) Q_2 + (7500 h - 96600) Q_3 + 36000 Q_4 + 4320 Q_5 
   \end{split}
  \end{equation*}
  \begin{equation*}
   \begin{split}
    Q_3 h = & ~ (-35 h^4 + 400 h^3 + 27125 h^2 - 320900 h - 1418300) Q_0 + (-175 h^3 \\
            & + 2000 h^2 + 87500 h - 792000) Q_1 + (-875 h^2 + 10000 h + 196876) Q_2 \\
            & + (-4375 h + 50000) Q_3 - 21600 Q_4 - 1440 Q_5 \\
    Q_4 h = & ~ (10 h^4 - 105 h^3 - 7850 h^2 + 86975 h + 445850) Q_0 + (50 h^3 - 525 h^2 \\
            & - 25500 h + 215500) Q_1 + (250 h^2 - 2625 h - 58750) Q_2 + (1250 h \\
            & - 13124) Q_3 + 6250 Q_4 + 240 Q_5 \\
    Q_5 h = & ~ (-h^4 + 10 h^3 + 790 h^2 - 8440 h - 46680) Q_0 + (-5 h^3 + 50 h^2 + 2575 h \\
            & - 20950) Q_1 + (-25 h^2 + 250 h + 6000) Q_2 + (-125 h + 1250) Q_3 \\
            & - 624 Q_4 + 10 Q_5 
   \end{split}
  \end{equation*}

  \item The product structure is subject to {\em Adem relations}: 
  \begin{equation*}
   \begin{split}
    Q_1 Q_0 = & ~ 55 Q_0 Q_1 + (55 h - 300) Q_0 Q_2 + (55 h^2 - 300 h - 14250) Q_0 Q_3 + (55 h^3 \\
              & - 300 h^2 - 29375 h + 163750) Q_0 Q_4 + (55 h^4 - 300 h^3 - 44500 h^2 \\
              & + 328750 h + 3228750) Q_0 Q_5 + 275 Q_1 Q_2 + (275 h - 1500) Q_1 Q_3 \\
              & + (275 h^2 - 1500 h - 71250) Q_1 Q_4 + (275 h^3 - 1500 h^2 - 146875 h \\
              & + 818750) Q_1 Q_5 - 5 Q_2 Q_1 + 1375 Q_2 Q_3 + (1375 h - 7500) Q_2 Q_4 \\
              & + (1375 h^2 - 7500 h - 356250) Q_2 Q_5 - 25 Q_3 Q_2 + 6875 Q_3 Q_4 \\
              & + (6875 h - 37500) Q_3 Q_5 - 125 Q_4 Q_3 + 34375 Q_4 Q_5 - 625 Q_5 Q_4 \\
    Q_2 Q_0 = & -60 Q_0 Q_1 + (-60 h + 175) Q_0 Q_2 + (-60 h^2 + 175 h + 16250) Q_0 Q_3 \\
              & + (-60 h^3 + 175 h^2 + 32750 h - 138000) Q_0 Q_4 + (-60 h^4 + 175 h^3 \\
              & + 49250 h^2 - 276125 h - 3943750) Q_0 Q_5 - 300 Q_1 Q_2 + (-300 h \\
              & + 875) Q_1 Q_3 + (-300 h^2 + 875 h + 81250) Q_1 Q_4 + (-300 h^3 + 875 h^2 \\
              & + 163750 h - 690000) Q_1 Q_5 - 1500 Q_2 Q_3 + (-1500 h + 4375) Q_2 Q_4 \\
              & + (-1500 h^2 + 4375 h + 406250) Q_2 Q_5 - 5 Q_3 Q_1 - 7500 Q_3 Q_4 \\
              & + (-7500 h + 21875) Q_3 Q_5 - 25 Q_4 Q_2 - 37500 Q_4 Q_5 - 125 Q_5 Q_3 \\
    Q_3 Q_0 = & ~ 35 Q_0 Q_1 + (35 h - 50) Q_0 Q_2 + (35 h^2 - 50 h - 9600) Q_0 Q_3 + (35 h^3 \\
              & - 50 h^2 - 19225 h + 66250) Q_0 Q_4 + (35 h^4 - 50 h^3 - 28850 h^2 \\
              & + 132500 h + 2411875) Q_0 Q_5 + 175 Q_1 Q_2 + (175 h - 250) Q_1 Q_3 \\
              & + (175 h^2 - 250 h - 48000) Q_1 Q_4 + (175 h^3 - 250 h^2 - 96125 h \\
              & + 331250) Q_1 Q_5 + 875 Q_2 Q_3 + (875 h - 1250) Q_2 Q_4 + (875 h^2 \\
              & - 1250 h - 240000) Q_2 Q_5 + 4375 Q_3 Q_4 + (4375 h - 6250) Q_3 Q_5 \\
              & - 5 Q_4 Q_1 + 21875 Q_4 Q_5 - 25 Q_5 Q_2 \\
    Q_4 Q_0 = & -10 Q_0 Q_1 + (-10 h + 5) Q_0 Q_2 + (-10 h^2 + 5 h + 2750) Q_0 Q_3 
   \end{split}
  \end{equation*}
  \begin{equation*}
   \begin{split}
              & + (-10 h^3 + 5 h^2 + 5500 h - 16375) Q_0 Q_4 + (-10 h^4 + 5 h^3 + 8250 h^2 \\
              & - 32750 h - 705000) Q_0 Q_5 - 50 Q_1 Q_2 + (-50 h + 25) Q_1 Q_3 + (-50 h^2 \\
              & + 25 h + 13750) Q_1 Q_4 + (-50 h^3 + 25 h^2 + 27500 h - 81875) Q_1 Q_5 \\
              & - 250 Q_2 Q_3 + (-250 h + 125) Q_2 Q_4 + (-250 h^2 + 125 h \\
              & + 68750) Q_2 Q_5 - 1250 Q_3 Q_4 + (-1250 h + 625) Q_3 Q_5 - 6250 Q_4 Q_5 \\
              & - 5 Q_5 Q_1 \\
    Q_5 Q_0 = & ~ Q_0 Q_1 + h Q_0 Q_2 + (h^2 - 275) Q_0 Q_3 + (h^3 - 550 h + 1500) Q_0 Q_4 + (h^4 \\
              & - 825 h^2 + 3000 h + 71250) Q_0 Q_5 + 5 Q_1 Q_2 + 5 h Q_1 Q_3 + (5 h^2 \\
              & - 1375) Q_1 Q_4 + (5 h^3 - 2750 h + 7500) Q_1 Q_5 + 25 Q_2 Q_3 + 25 h Q_2 Q_4\\
              & + (25 h^2 - 6875) Q_2 Q_5 + 125 Q_3 Q_4 + 125 h Q_3 Q_5 + 625 Q_4 Q_5 
   \end{split}
  \end{equation*}

  \item The coproduct structure is given by {\em Cartan formulas}: 
  \begin{equation*}
   \begin{split}
    Q_0(x y) = & ~ Q_0(x) Q_0(y) - 5 \big( Q_1(x) Q_5(y) + Q_2(x) Q_4(y) + Q_3(x) Q_3(y) \\
               & + Q_4(x) Q_2(y) + Q_5(x) Q_1(y) \big) - 50 \big( Q_2(x) Q_5(y) + Q_3(x) Q_4(y) \\
               & + Q_4(x) Q_3(y) + Q_5(x) Q_2(y) \big) - 325 \big( Q_3(x) Q_5(y) + Q_4(x) Q_4(y) \\
               & + Q_5(x) Q_3(y) \big) - 1800 \big( Q_4(x) Q_5(y) + Q_5(x) Q_4(y) \big) \\
               & - 9350 Q_5(x) Q_5(y) \\
    Q_1(x y) = & ~ \big( Q_0(x) Q_1(y) + Q_1(x) Q_0(y) \big) + h \big( Q_1(x) Q_5(y) + Q_2(x) Q_4(y) \\
               & + Q_3(x) Q_3(y) + Q_4(x) Q_2(y) + Q_5(x) Q_1(y) \big) + (10 h - 5) \big( Q_2(x) Q_5(y) \\
               & + Q_3(x) Q_4(y) + Q_4(x) Q_3(y) + Q_5(x) Q_2(y) \big) + (65 h \\
               & - 50) \big( Q_3(x) Q_5(y) + Q_4(x) Q_4(y) + Q_5(x) Q_3(y) \big) + (360 h \\
               & - 325) \big( Q_4(x) Q_5(y) + Q_5(x) Q_4(y) \big) + (1870 h - 1800) Q_5(x) Q_5(y) \\
    Q_2(x y) = & ~ \big( Q_0(x) Q_2(y) + Q_1(x) Q_1(y) + Q_2(x) Q_0(y) \big) - 55 \big( Q_1(x) Q_5(y) \\
               & + Q_2(x) Q_4(y) + Q_3(x) Q_3(y) + Q_4(x) Q_2(y) + Q_5(x) Q_1(y) \big) + (h \\
               & - 550) \big( Q_2(x) Q_5(y) + Q_3(x) Q_4(y) + Q_4(x) Q_3(y) + Q_5(x) Q_2(y) \big) \\
               & + (10 h - 3580) \big( Q_3(x) Q_5(y) + Q_4(x) Q_4(y) + Q_5(x) Q_3(y) \big) + (65 h \\
               & - 19850) \big( Q_4(x) Q_5(y) + Q_5(x) Q_4(y) \big) + (360 h - 103175) Q_5(x) Q_5(y) \\
    Q_3(x y) = & ~ \big( Q_0(x) Q_3(y) + Q_1(x) Q_2(y) + Q_2(x) Q_1(y) + Q_3(x) Q_0(y) \big) \\
               & + 60 \big( Q_1(x) Q_5(y) + Q_2(x) Q_4(y) + Q_3(x) Q_3(y) + Q_4(x) Q_2(y) \\
               & + Q_5(x) Q_1(y) \big) + 545 \big( Q_2(x) Q_5(y) + Q_3(x) Q_4(y) + Q_4(x) Q_3(y) \\
               & + Q_5(x) Q_2(y) \big) + (h + 3350) \big( Q_3(x) Q_5(y) + Q_4(x) Q_4(y) \\
               & + Q_5(x) Q_3(y) \big) + (10 h + 18020) \big( Q_4(x) Q_5(y) + Q_5(x) Q_4(y) \big) + (65 h \\
               & + 92350) Q_5(x) Q_5(y) 
   \end{split}
  \end{equation*}
  \begin{equation*}
   \begin{split}
    Q_4(x y) = & ~ \big( Q_0(x) Q_4(y) + Q_1(x) Q_3(y) + Q_2(x) Q_2(y) + Q_3(x) Q_1(y) \\
               & + Q_4(x) Q_0(y) \big) - 35 \big( Q_1(x) Q_5(y) + Q_2(x) Q_4(y) + Q_3(x) Q_3(y) \\
               & + Q_4(x) Q_2(y) + Q_5(x) Q_1(y) \big) - 290 \big( Q_2(x) Q_5(y) + Q_3(x) Q_4(y) \\
               & + Q_4(x) Q_3(y) + Q_5(x) Q_2(y) \big) - 1730 \big( Q_3(x) Q_5(y) + Q_4(x) Q_4(y) \\
               & + Q_5(x) Q_3(y) \big) + (h - 9250) \big( Q_4(x) Q_5(y) + Q_5(x) Q_4(y) \big) + \big( 10 h \\
               & - 47430 \big) Q_5(x) Q_5(y) \\
    Q_5(x y) = & ~ \big( Q_0(x) Q_5(y) + Q_1(x) Q_4(y) + Q_2(x) Q_3(y) + Q_3(x) Q_2(y) \\
               & + Q_4(x) Q_1(y) + Q_5(x) Q_0(y) \big) + 10 \big( Q_1(x) Q_5(y) + Q_2(x) Q_4(y) \\
               & + Q_3(x) Q_3(y) + Q_4(x) Q_2(y) + Q_5(x) Q_1(y) \big) + 65 \big( Q_2(x) Q_5(y) \\
               & + Q_3(x) Q_4(y) + Q_4(x) Q_3(y) + Q_5(x) Q_2(y) \big) + 360 \big( Q_3(x) Q_5(y) \\
               & + Q_4(x) Q_4(y) + Q_5(x) Q_3(y) \big) + 1870 \big( Q_4(x) Q_5(y) + Q_5(x) Q_4(y) \big) \\
               & + (h + 9450) Q_5(x) Q_5(y) \qquad\qquad\qquad\qquad\qquad\qquad\qquad\qquad\quad~~~
   \end{split}
  \end{equation*}

  \item Subject to the above commutation and Adem relations, $\G$ becomes a free 
  left module over $E_0$.  A basis consists of monomials in the generators $Q_i$ 
  which are in the form $Q_0^m Q_{i_1} \cdots Q_{i_n}$, with $m \geq 0$, 
  $n \geq 0$ ($n = 0$ corresponds to $Q_0^m$), and $1 \leq i_k \leq 5$.  The 
  grading on $\G$ refers to the sum of the exponents in each of these monomials.  
 \end{itemize}
\end{ex}

Let $E$ be a Morava $E$-theory of height $n$ at the prime $p$.  Given any 
$K(n)$-local commutative $E$-algebra $A$, the elements $Q_i$ above are examples 
of additive power operations having $A^0$ as both domain and range.  Such 
operations arise from the total power operation 
\begin{equation}
 \label{totalpo}
 \psi^p \co A^0 \to A^0(B\Sigma_p) / J \cong A^0[\A] / \big( w(\A) \big) 
\end{equation}
where $J$ is a transfer ideal, and $w(\A) \in E^0[\A]$ is a monic polynomial of 
degree $r \ce 1 + p + p^2 + \cdots + p^{n - 1}$ (cf.~\eqref{psi}).  We define 
{\em individual power operations} $Q_i \co A^0 \to A^0$ by the formula 
\begin{equation}
 \label{Q}
 \psi^p(x) = \sum_{i = 0}^{r - 1} Q_i(x) \, \A^i 
\end{equation}
In particular, given a space $X$, each $Q_i$ acts on $E^0 X$ if we take $A$ to 
be the spectrum of functions from $\Sigma_+^\infty X$ to $E$.  These individual 
power operations $Q_i$ generate the \DL algebra of the $E$-theory 
(cf.~\cite[proof of Theorem 3.10]{p3}).  

\begin{prop}
 \label{prop:Q}
 Let $E$ be a Morava $E$-theory of height $n = 2$ at the prime $p$.  Given any 
 $K(2)$-local commutative $E$-algebra $A$, let $\psi^p$ be the total power 
 operation in \eqref{totalpo} with the polynomial 
 \begin{equation}
  \label{wi}
  w(\A) = \A^{p + 1} + w_p \A^p + \cdots + w_1 \A + w_0 
 \end{equation}
 and let $Q_i$, $0 \leq i \leq p$ be the corresponding individual power 
 operations in \eqref{Q}.  For $\mu = 1$ and $\mu = 2$, let 
 \[
  t_{\mu,\,p} \co A^0 \to p^{-1} A^0 
 \]
 be the topological Hecke operators that are defined as in \eqref{tp} and 
 \eqref{t2p} from the above total power operation $\psi^p$ on $A^0$.  We have 
 the following identities: 
 \[
  t_{1,\,p} = \frac{1}{p} \sum_{i = 0}^p c_i \, Q_i \qquad \ad \qquad t_{2,\,p} 
  = \frac{1}{p^2} \sum_{j = 0}^p \sum_{i = 0}^j w_0^i \, d_{j - i} \, Q_i Q_j 
 \]
 where 
 \begin{equation*}
  \begin{split}
   c_i = & \left\{\!\!
   \begin{array}{ll}
    p + 1 & \qquad\, i = 0 \\
    -\sum_{k = 0}^{i - 1} w_{p + 1 + k - i} \, c_k + (p + 1 - i) \, 
    w_{p + 1 - i} & \qquad 1 \leq i \leq p 
   \end{array}
   \right.\\
   d_\T = & \left\{\!\!
   \begin{array}{ll}
    1 & \qquad\qquad\qquad\qquad\quad\, \T = 0 \\
    -\sum_{k = 0}^{\T - 1} w_0^{\T - k - 1} w_{\T - k} \, d_k & 
    \qquad\qquad\qquad\qquad\quad\, 1 \leq \T \leq p 
   \end{array}
   \right.\qquad\quad~\,
  \end{split}
 \end{equation*}
 In fact, for $i \geq 1$ and $\T \geq 1$, 
 \begin{equation*}
  \begin{split}
    c_i = & ~ i \sum_{\stackrel{\scriptstyle m_1 + 2 m_2 + \cdots + v m_v = i}
            {m_s \geq 1}} (-1)^{m_1 + \cdots + m_v} \, 
            \frac{(m_1 + \cdots + m_v - 1)!}{m_1! \cdots m_v!} \, 
            w_{p + 1 - 1}^{m_1} \cdots w_{p + 1 - v}^{m_v} \\
   d_\T = & ~ \sum_{n = 0}^{\T - 1} (-1)^{\T - n} \, w_0^n 
            \sum_{\stackrel{\scriptstyle m_1 + \cdots + m_{\T - n} = \T}
            {m_s \geq 1}} w_{m_1} \cdots w_{m_{\T - n}} 
  \end{split}
 \end{equation*}
\end{prop}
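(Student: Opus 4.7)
The plan is to expand the defining formulas using $\psi^p(x) = \sum_{j=0}^p Q_j(x)\,\A^j$ and to match coefficients. The computation for $t_{1,p}$ is a direct application of classical symmetric function identities, whereas $t_{2,p}$ requires unraveling the composite $\phi = \psi^p \circ \psi^p$; the bulk of the work lies there.

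For $t_{1,p}$, I substitute the expansion into $t_{1,p}(x) = \frac{1}{p}\sum_{i=0}^p \psi^p_i(x)$ and interchange summations to obtain
\[
 t_{1,p}(x) = \frac{1}{p}\sum_{j=0}^p Q_j(x)\,\Bigl(\sum_{i=0}^p \A_i^{\,j}\Bigr),
\]
which identifies $c_j$ as the $j$-th power sum of the roots of $w(\A)$. Since the coefficients $w_i$ determine the elementary symmetric functions of the $\A_i$ via $e_k = (-1)^k w_{p+1-k}$, the stated recursion for $c_i$ is Newton's identity after this sign substitution, and the closed form is Girard's formula in the version corrected in Section \ref{subsec:correction}.

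For $t_{2,p}$, I compute $\phi$ by iteration. The first $\psi^p$ yields $\sum_j Q_j(x)\,\A^j \in A^0[\A]/(w(\A))$, and the second lands in a further extension generated by the parameter $\widetilde{\A}$ of the dual canonical subgroup. The cotangent-level identity $\widetilde{\Psi}^{(p)} \circ \Psi^{(p)} = [p]$ (Remark \ref{rmk:K} and the derivation in Example \ref{ex:po}) yields the relation $\widetilde{\A} \cdot \A = w_0$, and inverting $\A$ via $w(\A) = 0$ presents $\widetilde{\A}$ as a polynomial of degree $p$ in $\A$. Substituting $\psi^p(Q_j(x)) = \sum_i Q_i Q_j(x)\,\widetilde{\A}^{\,i}$, reducing each $\widetilde{\A}^{\,i}$ modulo $w(\A)$, and using that $\phi(x) \in A^0$ to isolate the scalar part, the calculation collects the coefficient of $Q_i Q_j(x)$ for $i \le j$ as $w_0^{\,i}\,d_{j-i}$.

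The coefficients $d_\T$ are then captured by a generating function. Setting $D(T) \ce \sum_{\T \ge 0} d_\T T^\T$, the stated recursion
\[
 d_\T + \sum_{j=1}^{\T} w_0^{\,j-1}\,w_j\,d_{\T - j} = 0 \quad (\T \ge 1)
\]
is equivalent to $D(T)\bigl(1 + \sum_{j \ge 1} w_0^{\,j-1}\,w_j\,T^j\bigr) = 1$, and expanding $D(T)$ as a geometric series in $\sum_j w_0^{\,j-1} w_j T^j$ then sorting the coefficient of $T^\T$ by ordered compositions $m_1 + \cdots + m_n = \T$ with $m_s \ge 1$ produces the closed form. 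The main obstacle is the bookkeeping in the third paragraph: tracking precisely how the second $\psi^p$ interacts with $\A$, how $\widetilde{\A}^{\,i}$ reduces modulo $w(\A)$, and verifying that the remaining coefficients of positive $\A$-powers conspire via Adem relations to cancel, leaving the stated combination. This computation parallels the prime-$3$ case in \cite[Proposition 3.6]{p3}, adapted to general $p$, with the combinatorics of $d_\T$ made explicit by the generating function above.
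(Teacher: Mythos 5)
Your proposal is correct and follows essentially the same route as the paper: for $t_{1,\,p}$ the identification of $c_i$ as power sums of the roots of $w(\A)$ plus Newton's and Girard's formulas, and for $t_{2,\,p}$ the expansion of $\psi^p \circ \psi^p$, elimination via $\widetilde{\A} \cdot \A = w_0$, and extraction of the constant term $d_\T$ of $\widetilde{\A}^{\,\T}$ as a polynomial in $\A$, justified by the fact that $\phi(x)$ lies in $A^0$. The only departure is cosmetic: you derive the closed form for $d_\T$ by inverting the generating function $1 + \sum_{j \geq 1} w_0^{\,j-1} w_j T^{\,j}$ and expanding geometrically over compositions, where the paper runs an induction on $\T$; both arguments are routine and produce the same sum.
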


\begin{proof}
 By definitions \eqref{tp} and \eqref{Q}, 
 \begin{equation*}
  \begin{split}
   t_{1,\,p}(x) = & ~ \frac{1}{p} \sum_{j = 0}^p \psi^p_j(x) \\
                = & ~ \frac{1}{p} \sum_{j = 0}^p \sum_{i = 0}^p Q_i(x) \A_j^i \\
                = & ~ \frac{1}{p} \sum_{i = 0}^p \left( \sum_{j = 0}^p 
                    \A_j^i \right) Q_i(x) 
  \end{split}
 \end{equation*}
 Since the parameters $\A_j$ are the roots of $w(\A)$ in \eqref{wi}, the 
 formulas for $c_i = \sum_{j=0}^p \A_j^i$ then follow from Newton's and Girard's 
 formulas relating power sums and elementary symmetric functions (see, e.g., 
 \cite[Problem 16-A]{cc} and Section \ref{subsec:correction}).  

 For $t_{2,\,p}(x)$, we first write by \eqref{t2p} that 
 \begin{equation*}
  \begin{split}
   t_{2,\,p}(x) = & ~ \frac{1}{p^2} \psi^p \big( \psi^p(x) \big) \\
                = & ~ \frac{1}{p^2} \psi^p \sum_{j = 0}^p Q_j(x) \A^{\,j} \\
                = & ~ \frac{1}{p^2} \sum_{j = 0}^p \psi^p \big( Q_j(x) \big) 
                  \psi^p(\A)^{\,j} \\
                = & ~ \frac{1}{p^2} \sum_{j = 0}^p \left( \sum_{i = 0}^p 
                  Q_i Q_j(x) \A^i \right) \widetilde{\A}^{\,j} 
  \end{split}
 \end{equation*}
 Since the target of $t_{2,\,p}$ is $p^{-1} A^0$, the above identity should 
 simplify to contain neither $\A$ nor $\widetilde{\A}$.  Thus in view of 
 \eqref{w0}, we rewrite 
 \[
  t_{2,\,p}(x) = \frac{1}{p^2} \sum_{j = 0}^p \sum_{i = 0}^j w_0^i \, Q_i Q_j(x) 
  \, \widetilde{\A}^{\,j - i} 
 \]
 For $0 \leq \T \leq p$, we next express each $\widetilde{\A}^\T$ as a 
 polynomial in $\A$ with coefficients in $E^0$, and verify that the constant 
 term of this polynomial is $d_\T$ as stated.  

 The case $\T = 0$ is clear.  For $1 \leq \T \leq p$, we have 
 \begin{equation*}
  \begin{split}
   \widetilde{\A}^\T = & \left( \frac{w_0}{\A} \right)^{\!\T} 
                         \qquad\qquad\qquad\qquad\qquad\qquad\qquad\qquad 
                         \text{by \eqref{w0}} \\
                     = & ~ \frac{w_0^{\T - 1} (-\A^{p + 1} - w_p \A^p - \cdots 
                         - w_1 \A)}{\A^\T} 
                         \qquad\qquad~\, \text{by \eqref{wi}} \\
                     = & ~ w_0^{\T - 1} (-\A^{p + 1 - \T} - w_p \A^{p - \T} 
                         - \cdots - w_{\T + 1} \A) \\
                       & - w_0^{\T - 1} w_\T 
                         - \frac{w_0^{\T - 1} w_{\T - 1}}{\A} - \cdots 
                         - \frac{w_0^{\T - 1} w_1}{\A^{\T - 1}} \\
                     = & ~ w_0^{\T - 1} (-\A^{p + 1 - \T} - w_p \A^{p - \T} 
                         - \cdots - w_{\T + 1} \A) \\
                       & - w_0^{\T - 1} w_\T 
                         - w_0^{\T - 2} w_{\T - 1} \widetilde{\A} - \cdots 
                         - w_1 \widetilde{\A}^{\T - 1} 
                         \quad~~~ \text{by \eqref{w0}} 
  \end{split}
 \end{equation*}
 and thus 
 \begin{equation*}
  \begin{split}
   d_\T = & -w_0^{\T - 1} w_\T - w_0^{\T - 2} w_{\T - 1} d_1 - \cdots 
            - w_1 d_{\T - 1} \qquad\qquad\quad~ \\
        = & -\sum_{k = 0}^{\T - 1} w_0^{\T - k - 1} w_{\T - k} \, d_k 
  \end{split}
 \end{equation*}
 This gives the first identity for $d_\T$ stated in the proposition.  From this 
 relation, we show the second stated identity for $d_\T$ by induction on $\T$.  
 The base case $\T = 1$, with $d_1 = -w_1$, can be checked directly.  For 
 $\T \geq 2$, we first rewrite 
 \[
  d_\T = -w_0^{\T - 1} w_\T - \sum_{r = 1}^{\T - 1} 
  w_0^{r - 1} w_r \, d_{\T - r} 
 \]
 Expanding $d_{\T - r}$ for $1 \leq r \leq \T - 1$ by the induction hypothesis, 
 we then arrange $\sum_{r = 1}^{\T - 1} w_0^{r - 1} w_r \, d_{\T - r}$ above 
 into a sum indexed by $n$ with $0 \leq n \leq \T - 2$, where we collect terms 
 that contain exactly an $n$-power of $w_0$: 
 \begin{equation*}
  \begin{split}
   d_\T = & -w_0^{\T - 1} w_\T - \sum_{n = 0}^{\T - 2} 
            \Bigg( \sum_{r = 1}^{n + 1} w_0^{r - 1} w_r \, 
            (-1)^{\T - r - (n - r + 1)} w_0^{n - r + 1} \\
          & \qquad\qquad\qquad\qquad\, \sum_{\stackrel{\scriptstyle m_{1,r} 
            + \cdots + m_{\T - r - (n - r + 1),r} = \T - r}{m_{s,r} \geq 1}} 
            w_{m_{1,r}} \cdots w_{m_{\T - r - (n - r + 1),r}} \Bigg) \\\\
        = & -w_0^{\T - 1} w_\T - \sum_{n = 0}^{\T - 2} (-1)^{\T - n - 1} w_0^n 
            \sum_{r = 1}^{n + 1} w_r \sum_{\stackrel{\scriptstyle m_{1,r} 
            + \cdots + m_{\T - n - 1,r} = \T - r}{m_{s,r} \geq 1}} w_{m_{1,r}} 
            \cdots w_{m_{\T - n - 1,r}} \\
        = & -w_0^{\T - 1} w_\T + \sum_{n = 0}^{\T - 2} (-1)^{\T - n} w_0^n 
            \sum_{\stackrel{\scriptstyle m_1 + \cdots + m_{\T - n} = \T}{m_s 
            \geq 1}} w_{m_1} \cdots w_{m_{\T - n}} \\
        = & ~ \sum_{n = 0}^{\T - 1} (-1)^{\T - n} w_0^n 
            \sum_{\stackrel{\scriptstyle m_1 + \cdots + m_{\T - n} = \T}{m_s 
            \geq 1}} w_{m_1} \cdots w_{m_{\T - n}} 
  \end{split}
 \end{equation*}
\end{proof}

\begin{rmk}
 \label{rmk:Adem}
 In the proof above, the method for computing $t_{2,\,p}$ applies more 
 generally.  It enables us to find formulas for Adem relations---one for each 
 $Q_i Q_0$, $1 \leq i \leq p$---in terms of the coefficients $w_j$ of $w(\A)$ 
 (see Example \ref{ex:gamma} and cf.~\cite[proof of Proposition 3.6\,(iv)]{p3}).  
 We can also express Cartan formulas using these coefficients.  However, 
 commutation relations are determined by both the terms $w_j$ and $\psi^p(w_j)$ 
 (cf.~Remark \ref{rmk:invl}).  
\end{rmk}

\begin{ex}
 \label{ex:t5}
 For $p = 5$, by Proposition \ref{prop:Q}, we compute from \eqref{w} that 
 \begin{equation*}
  \begin{split}
   t_{1,\,5} = & ~ \frac{6}{5} Q_0 + 2 Q_1 + 6 Q_2 + 26 Q_3 + 126 Q_4 
                 + (h + 600) Q_5 \\
   t_{2,\,5} = & ~ \frac{1}{25} \big( Q_0 Q_0 + h Q_0 Q_1 + (h^2 - 275) Q_0 Q_2 
                 + (h^3 - 550 h + 1500) Q_0 Q_3 \\
               & + (h^4 - 825 h^2 + 3000 h + 71250) Q_0 Q_4 + (h^5 - 1100 h^3 
                 + 4500 h^2 
  \end{split}
 \end{equation*}
 \begin{equation*}
  \begin{split}
  \qquad\qquad & + 218125 h - 818750) Q_0 Q_5 + 5 Q_1 Q_1 + 5 h Q_1 Q_2 + (5 h^2 
                 - 1375) Q_1 Q_3 \\
               & + (5 h^3 - 2750 h + 7500) Q_1 Q_4 + (5 h^4 - 4125 h^2 
                 + 15000 h \\
               & + 356250) Q_1 Q_5 + 25 Q_2 Q_2 + 25 h Q_2 Q_3 + (25 h^2 
                 - 6875) Q_2 Q_4 + (25 h^3 \\
               & - 13750 h + 37500) Q_2 Q_5 + 125 Q_3 Q_3 + 125 h Q_3 Q_4 
                 + (125 h^2 \\
               & - 34375) Q_3 Q_5 + 625 Q_4 Q_4 + 625 h Q_4 Q_5 
                 + 3125 Q_5 Q_5 \big) 
  \end{split}
 \end{equation*}
 Observe that $t_{1,\,5} \equiv \sum_{i = 0}^5 \si_{i - 1}(5) \, Q_i \md \d$, 
 where $\d = h - 26$ as in Example \ref{ex:mfe0}.  
\end{ex}

\begin{thm}
 \label{thm:center}
 Let $E$ be a Morava $E$-theory of height $2$ at the prime $p$, and let $\G$ be 
 its \DL algebra.  Define $\tilde{t}_{\mu,\,p} \ce p^\mu t_{\mu,\,p}$ for 
 $\mu = 1$ and $\mu = 2$, where $t_{\mu,\,p}$ are the topological Hecke 
 operators in Proposition \ref{prop:Q}.  Then $\tilde{t}_{2,\,p}$ lies in the 
 center of $\G$ but $\tilde{t}_{1,\,p}$ does not.  
\end{thm}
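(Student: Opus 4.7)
\textbf{The plan} is to use the geometric interpretation of power operations as corresponding to subgroup schemes of the universal elliptic curve $\CC$. The key observation is that $\tilde{t}_{2,p} = p^2 t_{2,p} = \phi = \psi^p \circ \psi^p$ corresponds under \cite[Theorem B]{cong} to the subgroup $\CC[p]$, being the kernel of $\widetilde{\Psi}^{(p)} \circ \Psi^{(p)} = [p]$ on $\CC$; so $\tilde{t}_{2,p} \in \G_2$ is the power operation of the full $p$-torsion. To check $\tilde{t}_{2,p} Q_k = Q_k \tilde{t}_{2,p}$ in $\G$ for each generator $Q_k \in \G_1$---corresponding to an order-$p$ subgroup $G_k \subset \CC$---I would verify that both composites in $\G_3$ correspond to the same order-$p^3$ subgroup $[p]^{-1}(G_k) \subset \CC$: for $\tilde{t}_{2,p} Q_k$, quotienting first by $G_k$ and then by $(\CC/G_k)[p] = [p]^{-1}(G_k)/G_k$ has total kernel $[p]^{-1}(G_k)$; for $Q_k \tilde{t}_{2,p}$, quotienting first by $\CC[p]$ and then by the pullback of $G_k$ through the canonical isomorphism $\CC/\CC[p] \xrightarrow{\sim} \CC$ of \eqref{iso}, namely $[p]^{-1}(G_k)/\CC[p]$, also has total kernel $[p]^{-1}(G_k)$. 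Since the $Q_k$'s generate $\G$ as an $E^0$-algebra, this establishes that $\tilde{t}_{2,p}$ is central.

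For the non-centrality of $\tilde{t}_{1,p}$, I would exhibit an explicit nonzero commutator at $p = 5$ using the data in Examples \ref{ex:gamma} and \ref{ex:t5}. The formula $\tilde{t}_{1,5} = 6 Q_0 + 10 Q_1 + 30 Q_2 + 130 Q_3 + 630 Q_4 + (5h + 3000) Q_5$ contains the scalar $h \in E^0$, which does not commute with the $Q_i$'s: the commutation relation $Q_0 h = (h^5 - 10 h^4 - 1065 h^3 + 12690 h^2 + 168930 h - 1462250) Q_0 + \cdots + 3120 Q_5$ forces nontrivial correction terms in $Q_0 \tilde{t}_{1,5}$, whereas $\tilde{t}_{1,5} Q_0$ requires reducing each $Q_i Q_0$ with $i \geq 1$ to basis form via the Adem relations. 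Comparing the two expressions in the standard basis of $\G_2$, the commutator $[\tilde{t}_{1,5}, Q_0]$ is visibly nonzero, so $\tilde{t}_{1,p}$ is not central.

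\textbf{The main obstacle} is justifying the identification invoked in the first argument: that the element $\sum_{j=0}^p \sum_{i=0}^j w_0^i d_{j - i} Q_i Q_j$ prescribed by Proposition \ref{prop:Q} is indeed the power operation of $\CC[p]$ in $\G_2$. Concretely, this amounts to unpacking $\phi = \psi^p \circ \psi^p$ via $\psi^p(\A) = \widetilde{\A}$ and the Atkin-Lehner relation $\A \widetilde{\A} = w_0$, and matching the resulting expression against the formula in Proposition \ref{prop:Q}. Once this identification is in place, the subgroup-scheme commutativity above translates directly into the desired identity in $\G$.
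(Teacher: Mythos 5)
There are two genuine gaps. First, your centrality argument for $\tilde{t}_{2,\,p}$ rests on the premise that each generator $Q_k\in\G$ ``corresponds to an order-$p$ subgroup $G_k\subset\CC$.'' That is not what the $Q_k$ are: the operations attached to the $p+1$ order-$p$ subgroups are the \emph{ring homomorphisms} $\psi^p_i$ (the total power operation with $\A$ specialized to the root $\A_i$), whereas the $Q_k$ are the additive coefficient operations defined by $\psi^p(x)=\sum_i Q_i(x)\,\A^i$; an individual $Q_k$ has no modular interpretation as ``quotient by $G_k$.'' Your kernel-composition argument does show that $\phi=\psi^p\circ\psi^p$ commutes with the total power operation (this is the geometric content of $[p]$ being central among isogenies), but to descend from that to $\tilde{t}_{2,\,p}Q_i=Q_i\tilde{t}_{2,\,p}$ for each $i$ you must also know what $\phi$ does to the parameter $\A$ and then compare coefficients of $\A^i$. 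This is exactly where the paper's proof lives: it observes that $\tilde{t}_{2,\,p}$ is a ring homomorphism with $\tilde{t}_{2,\,p}(\A)=\A$ (via the Atkin--Lehner relation $\widetilde{\A}\cdot\A=w_0$), writes the threefold composite $\psi^p\circ\psi^p\circ\psi^p$ in two ways, and matches coefficients of $\A^i$. You flag this identification as ``the main obstacle'' and treat it as bookkeeping, but it is the actual proof; the subgroup-by-subgroup picture cannot be applied to the $Q_k$ literally.

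Second, your non-centrality argument only addresses $p=5$ (and even there you assert rather than exhibit a nonzero coefficient of the commutator). The theorem claims $\tilde{t}_{1,\,p}$ is not central for \emph{every} prime $p$, so a single numerical example does not suffice. The paper gives a uniform argument: working modulo $p$ and using that $w_i\equiv 0 \md p$ for $i\neq 1$, the coefficient of the basis monomial $Q_0Q_1$ in $\tilde{t}_{1,\,p}\,Q_1$ is $p+1\equiv 1$, while in $Q_1\tilde{t}_{1,\,p}$ the only possible source of $Q_0Q_1$ is the Adem expansion of $Q_1Q_0$, whose $Q_0Q_1$-coefficient is $w_2\equiv 0$. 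Hence $\tilde{t}_{1,\,p}Q_1\neq Q_1\tilde{t}_{1,\,p}$ for all $p$. If you want to keep an explicit computation, you would need either to carry out this mod-$p$ comparison in general or to explain why the $p=5$ case is representative --- which it is not on its face.
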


\begin{proof}
 By Proposition \ref{prop:Q}, both $\tilde{t}_{1,\,p}$ and $\tilde{t}_{2,\,p}$ 
 are contained in $\G$.  In fact, $\tilde{t}_{2,\,p} = \psi^p \circ \psi^p$, and 
 is thus a ring homomorphism.  Note that $\tilde{t}_{2,\,p}(\A) = \A$ as a 
 result of the relation \eqref{w0}.  Since 
 $\psi^p = Q_0 + Q_1 \A + \cdots + Q_p \A^p$, we can then write the three-fold 
 composite $\psi^p \circ \psi^p \circ \psi^p$ in two ways: 
 \begin{equation*}
  \begin{split}
   \tilde{t}_{2,\,p} (Q_0 + Q_1 \A + \cdots + Q_p \A^p) = 
    & ~ \tilde{t}_{2,\,p} Q_0 + (\tilde{t}_{2,\,p} Q_1) (\tilde{t}_{2,\,p} \A) 
      + \cdots + (\tilde{t}_{2,\,p} Q_p) (\tilde{t}_{2,\,p} \A)^p \\
                                                        = 
    & ~ \tilde{t}_{2,\,p} Q_0 + (\tilde{t}_{2,\,p} Q_1) \A + \cdots 
      + (\tilde{t}_{2,\,p} Q_p) \A^p \\
   (Q_0 + Q_1 \A + \cdots + Q_p \A^p) \tilde{t}_{2,\,p} = 
    & ~ Q_0 \tilde{t}_{2,\,p} + (Q_1 \tilde{t}_{2,\,p}) \A + \cdots 
      + (Q_p \tilde{t}_{2,\,p}) \A^p 
  \end{split}
 \end{equation*}
 For each $0 \leq i \leq p$, comparing the coefficients for $\A^i$, we see that 
 $\tilde{t}_{2,\,p}$ commutes with $Q_i$.  Thus $\tilde{t}_{2,\,p}$ lies in the 
 center of $\G$.  

 It remains to show that the other operation $\tilde{t}_{1,\,p}$ is not central 
 in $\G$.  Analogous to the proof of Proposition \ref{prop:Q}, we find that the 
 term $Q_0 Q_1$ has coefficient $w_2$ in the Adem relation for $Q_1 Q_0$ (see 
 Remark \ref{rmk:Adem}).  Since $p | w_i$ for $2 \leq i \leq p$, we then compute 
 by Proposition \ref{prop:Q} that 
 \begin{equation*}
  \begin{split}
   Q_1 \tilde{t}_{1,\,p} = & ~ Q_1 \big( (p + 1) Q_0 - w_p Q_1 + c_2 Q_2 
                             + \cdots + c_p Q_p \big) \\
                    \equiv & ~ Q_1 Q_0 + Q_1 (c_2 Q_2 + \cdots + c_p Q_p) \md p 
  \end{split}
 \end{equation*}
 Thus the reduction of $Q_1 \tilde{t}_{1,\,p}$ modulo $p$ does not contain 
 $Q_0 Q_1$.  On the other hand, the term $Q_0 Q_1$ in 
 \[
  \tilde{t}_{1,\,p} Q_1 = \big( (p + 1) Q_0 - w_p Q_1 + c_2 Q_2 + \cdots 
  + c_p Q_p \big) Q_1 
 \]
 has coefficient $p + 1$.  Therefore 
 $\tilde{t}_{1,\,p} Q_1 \neq Q_1 \tilde{t}_{1,\,p}$.  
\end{proof}


\newpage
\renewcommand\refname{}
\newcommand{\AX}[1]{\href{http://arxiv.org/abs/#1}{arXiv:#1}}
\newcommand{\MRn}[2]{\href{http://www.ams.org/mathscinet-getitem?mr=#1}{MR#1#2}}
\newcommand{\name}{TateNormalLevelResolutions.pdf}
\wt{.}\vspace{-1.04in}

\end{document}